\newtheorem{theorem}{Theorem}[section]
\newtheorem{lemma}[theorem]{Lemma}
\newtheorem{proposition}[theorem]{Proposition}
\newtheorem{example}[theorem]{Example}
\newtheorem{definition}[theorem]{Definition}
\newtheorem{corollary}[theorem]{Corollary}
\newtheorem{remark}[theorem]{Remark}
\numberwithin{equation}{section}
\DeclareMathOperator{\Int}{Int}
\DeclareMathOperator{\sgn}{sgn}
\DeclareMathOperator{\supp}{supp}
\DeclareMathOperator{\Law}{Law}
\newenvironment{proof}[1][Proof]{\noindent\textbf{#1.} }{\ \rule{0.5em}{0.5em}}
\begin{document}

\title{ Zero-Noise Limit for High-Dimensional ODE with Measurable Drift}
\author{Liangquan Zhang$^{1}\thanks{%
The corresponding author: L. Zhang acknowledges the financial support partly
by the National Nature Science Foundation of China (Grant No. 12571486,
12171053) and the Fundamental Research Funds for the Central Universities,
and the Research Funds of Renmin University of China (No. 23XNKJ05).
Email:xiaoquan51011@163.com.}$ \\
%EndAName
{\small 1. School of Mathematics, }\\
{\small Renmin University of China, Beijing 100872, China}}
\maketitle

\begin{abstract}

This paper investigates the zero-noise limit of high-dimensional small-noise
diffusion processes governed by the stochastic differential equation (SDE)
\begin{equation*}
dX_{t}^{\varepsilon }=b(X_{t}^{\varepsilon })\,dt+\varepsilon \,dW_{t},\quad
X_{0}^{\varepsilon }=0,\quad \varepsilon >0,
\end{equation*}%
where the drift coefficient $b$ is assumed to be measurable and bounded.
Under this condition, the associated deterministic ordinary differential
equation (ODE) $\dot{x}_{t}=b(x_{t})$ may admit multiple Filippov
solutions---i.e., solutions in the sense of differential inclusions---due to
the lack of Lipschitz continuity or uniqueness criteria. However, the
introduction of non-degenerate additive noise restores well-posedness: the
perturbed system admits a unique strong (pathwise) solution for each $%
\varepsilon >0$.

We systematically investigate the geometric structure and probabilistic
properties of the weak limit distribution $\mu ^{0}=\lim_{\varepsilon
\rightarrow 0}\mathcal{L}(X_{t}^{\varepsilon })$ by integrating four key
theoretical tools: the Stroock-Varadhan support theorem, the comparison
theorem for diffusion processes, the law of the iterated logarithm (LIL) for
Brownian motion, and the Hausdorff dimension from geometric measure theory.

Specifically, we first clarify that the instantaneous escape solutions
within the set of Filippov solutions are the limit solutions of the
zero-noise limit, which play a dominant role in determining the structure of
the weak limit distribution. Then, the Stroock--Varadhan support theorem is
employed to characterize the support of the zero-noise limit distribution,
proving that it is exactly the closure of the set of points reached by these
instantaneous escape Filippov solutions at a fixed time $t$, while delayed
solutions (which stay at the origin for a finite time before leaving) are
geometrically negligible. The comparison theorem further verifies the
robustness of the zero-noise limit, ensuring that the weak convergence of $%
X_{t}^{\varepsilon }$ to $\mu ^{0}$ holds uniformly under small
perturbations of the drift $b$. The law of the iterated logarithm is applied
to quantify the fluctuation behavior of $X_{t}^{\varepsilon }$ as $%
\varepsilon \rightarrow 0$, revealing the asymptotic trajectory behavior of
the diffusion process near the deterministic ODE solutions (including the
instantaneous escape Filippov solutions). Additionally, the Hausdorff
dimension is used to analyze the geometric size of the support set, showing
that the support has a Hausdorff dimension strictly less than the ambient
space dimension $d$, which implies that the limit distribution $\mu ^{0}$ is
singular with respect to the Lebesgue measure. We also demonstrate that the
support set is compact but not necessarily connected, and its geometric
structure is solely determined by the deterministic dynamics of the drift $b$
and the instantaneous escape Filippov solutions, independent of the Brownian
motion and the space dimension $d$. Our results integrate probabilistic
limit theory, geometric measure theory, ODE non-uniqueness theory, and
differential inclusion theory, providing a comprehensive theoretical
framework for understanding the zero-noise limit problem in high-dimensional
non-unique systems, and offering new insights into the singular limit
distributions and their geometric characteristics in stochastic analysis.

%We obtain two results: i) The
%compactness of control domain takes important role in existence of OC; ii)
%In deterministic systems, the terminal constraints and finiteness of costs
%can be strictly satisfied simultaneously. However, in stochastic systems,
%due to the diffusion of noise and the coupling with terminal constraints,
%the optimal control cost will explode, and thus, we can only resort to near
%optimal control. These results can be verified by kinds of linear or nonlinear stochastic systems,
\end{abstract}

\tableofcontents

%\begin{abstract}
%We obtain two results: i) The
%compactness of control domain takes important role in existence of OC; ii)
%In deterministic systems, the terminal constraints and finiteness of costs
%can be strictly satisfied simultaneously. However, in stochastic systems,
%due to the diffusion of noise and the coupling with terminal constraints,
%the optimal control cost will explode, and thus, we can only resort to near
%optimal control. These results can be verified by kinds of linear or nonlinear stochastic systems,
%\end{abstract}

%\title{ Non-Existence of Optimal Control vs. Near Optimal Existence in Stochastic Systems}

%\title{Non-Existence of Optimal Controls and
%Near-Optimal Existence in Stochastic System}

%\title{A Counterexample for the Non-Existence of Optimal Controls and Existence of Near-Optimal Controls in Stochastic Systems\\A Concrete Counterexample: Optimal Control Non-Existence and Near-Optimal Existence in Stochastic Systems}

\noindent \textbf{AMS subject classifications:} 93E20, 60H15, 60H30.

\noindent \textbf{Key words: } Comparison theorem, Filippov solution,
Hausdorff dimension, Osgood non-uniqueness; solution selection; stochastic
differential equations; radial processes; weak convergence; zero-noise limit.

\section{Introduction}

\paragraph{Background and Motivation.}

The following ordinary differential equation
\begin{equation}
\left\{
\begin{array}{crl}
\mbox{\rm d}\xi ^{x}(t) & = & b\left( \xi ^{x}\left( t\right) \right) %
\mbox{\rm d}t,\text{\qquad }t\geq 0, \\
\text{ }\xi ^{x}\left( 0\right) & = & x\in \mathbb{R}^{d},%
\end{array}%
\right.  \label{1.1}
\end{equation}%
may have many solutions or have no solution at all if $b:\mathbb{R}%
^{d}\rightarrow \mathbb{R}^{d}$ is not Lipschitz continuous. This equation
can be regularized by adding the white noise $\varepsilon $\textrm{d}$W_{t}$
to its right-hand side with any positive small intensity $\varepsilon >0$
and $d$-dimensional Brownian motion $W$ which is the $d$-dimensional
coordinate process on the classical Wiener space $\left( \Omega ,\mathcal{F}%
,P\right) $, i.e., $\Omega $ is the set of continuous functions from $\left[
0,+\infty \right) $ to $\mathbb{R}^{d}$ starting from $0$ ($\Omega =C\left( %
\left[ 0,+\infty \right) ;\mathbb{R}^{d}\right) $ with the metric of the
uniform convergence), $\mathcal{F}$ the completed Borel $\sigma $-algebra
over $\Omega $, $P$ the Wiener measure and $W$ the canonical process: $%
W_{s}\left( \omega \right) =\omega _{s}$, $s\in \left[ 0,+\infty \right) ,$ $%
\omega \in \Omega .$ By $\left\{ \mathcal{F}_{s},0\leq s<+\infty \right\} $
we denote the natural filtration generated by $\left\{ W_{s}\right\} _{0\leq
s<+\infty }$ and augmented by all $P$-null sets.

More precisely, for any bounded Borel function $b:\mathbb{R}^{d}\rightarrow
\mathbb{R}^{d}$, $x\in \mathbb{R}^{d}$ and $d$-dimensional Brownian motion $%
W $, there exists a unique strong solution to the following stochastic
differential equation (SDE in short)
\begin{equation}
\left\{
\begin{array}{crl}
\mbox{\rm d}X^{x,\varepsilon }(t) & = & b\left( X^{x,\varepsilon }\left(
t\right) \right) \mbox{\rm d}t+\varepsilon \mbox{\rm d}W\left( t\right) ,%
\text{\qquad }t\geq 0, \\
\text{ }X^{x,\varepsilon }\left( 0\right) & = & x\in \mathbb{R}^{d},%
\end{array}%
\right.  \label{1.2}
\end{equation}%
for any fixed $\varepsilon >0$. This result can be seen in \cite{Bah, GEMM}.

A natural question concerns the behavior of the limit of perturbed SDEs (\ref%
{1.2}) with respect to the ODE (\ref{1.1}), as $\varepsilon \rightarrow
0^{+} $. With regard to this issue, we elaborate on the research
significance and challenges from the following three aspects: this issue
elucidates the impact of random disturbances on non-unique dynamical
systems, offering a critical perspective for understanding the long-term
behavior of complex systems. In disciplines such as physics, biology, and
economics, numerous systems are subject to noise interference, and their
deterministic models may exhibit non-uniqueness. Investigating the
zero-noise limit selection problem facilitates predicting the behavior of
these systems. Naturally, analyzing nonlinear dynamics and stochastic
processes in high-dimensional spaces is inherently intricate. When coupled
with the non-uniqueness arising from the Peano phenomenon, this renders the
problem highly challenging. In the classical Lipschitz case, we refer to
Freidlin and Wentzell \cite{Fr} and the references therein. However, when $b$
is merely continuous, more complex scenarios may arise. To highlight this
phenomenon, we will present a precise one-dimensional example (adapted from
\cite{BB}) as follows:

\begin{example}
\label{exa1}Put $b\left( x\right) =2\sgn(x)\sqrt{\left\vert x\right\vert }$.
The ODE
\begin{equation}
\left\{
\begin{array}{crl}
\xi ^{\prime }\left( t\right) & = & 2\sgn(\xi \left( t\right) )\sqrt{%
\left\vert \xi \left( t\right) \right\vert },\text{\qquad }t\geq 0, \\
\xi \left( 0\right) & = & 0,%
\end{array}%
\right.  \label{1.3}
\end{equation}%
has infinitely many solutions. But among of them only both \textquotedblleft
extremal\textquotedblright\ solutions $t\rightarrow t^{2}$ and $t\rightarrow
-t^{2}$\ are limits of the corresponding SDEs. In fact, the limit of $%
X^{x,\varepsilon }\left( t\right) $ (solution of (\ref{1.2})) is a
continuous stochastic process $X\left( \cdot \right) $ as $\varepsilon
\rightarrow 0^{+}$ defined by
\begin{equation*}
\left\{
\begin{array}{lll}
P\left( X\left( t\right) =t^{2}\right) & = & \frac{1}{2}, \\
P\left( X\left( t\right) =-t^{2}\right) & = & \frac{1}{2}.%
\end{array}%
\right.
\end{equation*}
\end{example}

Consider a physical model
\begin{equation}
\left\{
\begin{array}{crl}
h^{\prime }\left( t\right) & = & -2\sqrt{h\left( t\right) },\text{\qquad }%
t\in \left[ 0,T\right] , \\
h\left( T\right) & = & 0,\text{ }%
\end{array}%
\right.
\end{equation}%
which admits infinitely many solutions by Peano existence theorem. This
counterexample is derived from a physical model: Suppose that there is a
leaking container, the relationship between the height (function $h$) and
the time (variable $t$) of the water surface is defined by the above
differential equation, then because the leaking process can actually be
observed, there must be a solution to the equation. However, if you only
know the state of the container at a certain point after the leakage of
water ($h\left( T\right) =0$), it is impossible to predict how high the
original water level was (that is, there is no unique solution).

For the reader's convenience, let us recall the following result taken from
Theorem 4.1 of Bafico and Baldi (cf. \cite{BB}, (1982)). At the beginning,
let us recall the boundary value problem from \cite{BB},
\begin{eqnarray}
\frac{\varepsilon ^{2}}{2}\phi _{\varepsilon }^{\prime \prime }\left(
x\right) +b\left( x\right) \phi _{\varepsilon }^{\prime }\left( x\right)
&=&-1,\qquad x\in \left( x_{0}-r,x_{0}+r\right) ,  \notag \\
\phi _{\varepsilon }\left( x_{0}-r\right) &=&\phi _{\varepsilon }\left(
x_{0}+r\right) =0  \label{bvp}
\end{eqnarray}%
It\^{o}'s formula gives at once for $x\in \left( x_{0}-r,x_{0}+r\right) ,$ $%
\phi _{\varepsilon }\left( x\right) =\mathbb{E}_{x}^{\varepsilon }\left(
\tau \right) ,$ where $\tau $ denotes the exit time from $\left[
x_{0}-r,x_{0}+r\right] $. Every solution to (\ref{bvp}) is of the form $%
c_{1}^{\varepsilon }+c_{2}^{\varepsilon }A_{\varepsilon }\left( x\right)
-B_{\varepsilon }\left( x\right) $ where
\begin{eqnarray*}
A_{\varepsilon }\left( x\right) &=&\int_{0}^{x}\exp \left[ -\frac{2}{%
\varepsilon ^{2}}\int_{0}^{t}b\left( s\right) \right] ds,\text{ } \\
B_{\varepsilon }\left( x\right) &=&\frac{2}{\varepsilon ^{2}}%
\int_{0}^{x}du\int_{u}^{x}\exp \left[ -\frac{2}{\varepsilon ^{2}}%
\int_{u}^{t}b\left( s\right) \right] dt, \\
c_{1}^{\varepsilon } &=&B_{\varepsilon }\left( r\right) \frac{%
-A_{\varepsilon }\left( -r\right) }{A_{\varepsilon }\left( r\right)
-A_{\varepsilon }\left( -r\right) }+B_{\varepsilon }\left( -r\right) \frac{%
A_{\varepsilon }\left( r\right) }{A_{\varepsilon }\left( r\right)
-A_{\varepsilon }\left( -r\right) }, \\
c_{2}^{\varepsilon } &=&\frac{B_{\varepsilon }\left( r\right)
-B_{\varepsilon }\left( -r\right) }{A_{\varepsilon }\left( r\right)
-A_{\varepsilon }\left( -r\right) }.
\end{eqnarray*}%
Clearly, for multi-dimensional case, this method fails. But one is able to
analyze the limits of $A_{\varepsilon }\left( x\right) $, $B_{\varepsilon
}\left( x\right) $ under some suitable conditions and then obtaining the
following result.

\begin{proposition}
Suppose that for some $\delta >0,$ the functions
\begin{equation*}
h(x)=\min_{\left[ x,x+\delta \right] }b,k(x)=\max_{\left[ x-\delta ,x\right]
}b
\end{equation*}%
are such that%
\begin{equation*}
\int_{0}^{r}\frac{1}{h\left( x\right) }\mathrm{d}x<+\infty ,\text{ }%
\int_{0}^{-r}\frac{1}{k\left( x\right) }\mathrm{d}x<+\infty
\end{equation*}%
Then every limiting value $P$ of $\left\{ P^{\varepsilon }\right\}
_{\varepsilon }$ is, concentrated on the \emph{extremal} solutions $\psi
_{1} $ and $\psi _{2}$, for a small time interval. More precisely for every
cluster point $\alpha $ of $\frac{-A_{\varepsilon }\left( -r\right) }{%
A_{\varepsilon }\left( r\right) -A_{\varepsilon }\left( -r\right) }$ as $%
\varepsilon \rightarrow 0,$ for some $t>0$%
\begin{equation}
P_{\left\vert \mathcal{F}_{t}\right. }=\alpha \delta _{\psi _{1}}+\left(
1-\alpha \right) \delta _{\psi _{2}},  \label{ba}
\end{equation}%
where $P_{\left\vert \mathcal{F}_{t}\right. }$ denotes the restriction of $P$
to $\mathcal{F}_{t}$. Conversely, if $P$ is a limiting value of $\left\{
P^{\varepsilon }\right\} _{\varepsilon }$, then (\ref{ba}) holds.
\end{proposition}

In the literature, more one-dimensional cases were considered for instance
by \cite{BB, Ba}. They showed that the limits of the solutions of the
perturbed SDEs (\ref{1.2}) are processes which are supported by the
solutions of ODE (\ref{1.1}). In \cite{G, S}, the authors used the large
deviation technique to give a more precise description of the limit in the
case
\begin{equation}
\xi ^{\prime }\left( t\right) =2\sgn\left( \xi \left( t\right) \right)
\left\vert \xi \left( t\right) \right\vert ^{\gamma },\qquad \gamma \in
\left( 0,1\right) .  \label{1.4}
\end{equation}%
We point out that another method to Bafico and Baldi's original problem was
revisited by Delarue and Flandoli in \cite{df14}. They developed a delicate
argument based on exit times. The point of the their proof is based on its
dynamical character. Noteworthy it is also valid in multidimensional case
but with a very specific right-hand side, comparing with the prime
assumption of a general continuous function; see e.g. Trevisian \cite{T}.
Pilipenko and Proske \cite{pp18spl} study the limit behavior of differential
equations with non-Lipschitz coefficients that are perturbed by a small
self-similar noise for one-dimensional case. Fjordholm, Musch and Pilipenko
\cite{fmp23} study the zero-noise limit for autonomous, one-dimensional ODEs
with discontinuous right-hand sides.

For the multi-dimensional case, the article of \cite{BYQ} by Buckdahn,
Ouknine and Quincampoix shows that the limit has its support in the set of
solutions to ODE (\ref{1.1}) when $b$ is only measurable (see Proposition %
\ref{pro1} below). In \cite{dfv14}, Delarue, Flandoli and Vincenzi solve a
two-dimensional zero noise problem with discontinuous drift. We point out
the paper by Delarue and Maurelli \cite{md19}, in which the multidimensional
gradient dynamics with H\"{o}lder type coefficients was perturbed by a small
Wiener noise.

If the drift in (\ref{1.1}) has H\"{o}lder-type asymptotics in a
neighborhood of $x=0$ and the perturbation is a self-similar noise,
Pilipenko and Proske \cite{pp18} employ the space-time scaling and reduce a
solution of the small-noise problem to a study of long time behavior of a
SDE with a fixed noise, assuming that the drift coefficient has a jump
discontinuity along a hyperplane and is Lipschitz continuous in the upper
and lower half-spaces. Kulik and Pilipenko \cite{kp21} consider the drift
and diffusion are locally Lipschitz continuous outside a fixed hyperplane $H$%
. The drift $a(x)$ has a H\"{o}der asymptotic as $x$ approaches $H.$ The see
also \cite{pp20, pp23} for different extensions.

For other discussions, see \cite{Fl, MMP, BS} references therein. Besides,
we mention that the work by Attanasio and Flandoli \cite{AF} studied a zero
noise limit for some linear PDEs of transport type related to the family of
ODEs (\ref{1.4}).

Let us now recall the following result.

\begin{proposition}[Theorem 4 in \protect\cite{BYQ}]
\label{pro1}Let $X^{x,\varepsilon }\left( \cdot \right) $ be a strong
solution to the SDE (\ref{1.2}). Then, as $\varepsilon \rightarrow 0^{+},$
there exists a subsequence $\left\{ \varepsilon _{n}\right\} _{n\geq 1}$
such that $X^{x,\varepsilon _{n}}\left( \cdot \right) $ converges in law, as
$\varepsilon _{n}\rightarrow 0^{+}$, to some $X^{x}\left( \cdot \right) $
which is almost surely concentrated on the set of all Filippov solutions
(see Appendix \ref{app}) of ODE (\ref{1.1}).
\end{proposition}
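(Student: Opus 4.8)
\noindent \emph{Proof idea.} The plan is a tightness-plus-identification argument; the only non-routine point is the passage to the limit inside the drift integral. Writing the solution of (1.2) in integral form,
\[
X^{x,\varepsilon }(t)=x+\int_{0}^{t}b\bigl(X^{x,\varepsilon }(s)\bigr)\,\mathrm{d}s+\varepsilon W(t)=:x+A^{\varepsilon }(t)+\varepsilon W(t),
\]
and using that $b$ is bounded, one gets $|A^{\varepsilon }(t)-A^{\varepsilon }(r)|\leq \Vert b\Vert _{\infty }|t-r|$ for all $r,t\geq 0$ and all $\varepsilon >0$, with $A^{\varepsilon }(0)=0$. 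Hence every path of $A^{\varepsilon }$ lies in the fixed compact set $\mathcal{K}=\{g\in C([0,\infty );\mathbb{R}^{d}):g(0)=0,\ \mathrm{Lip}(g)\leq \Vert b\Vert _{\infty }\}$, so the laws of $A^{\varepsilon }$ are tight. Since $\{\varepsilon W:0<\varepsilon \leq 1\}$ is tight (its modulus of continuity on each $[0,T]$ is dominated by that of $W$, and $\sup_{[0,T]}|\varepsilon W|\leq \sup_{[0,T]}|W|$), the family of triples $(X^{x,\varepsilon },A^{\varepsilon },\varepsilon W)$ is tight in $C([0,\infty );\mathbb{R}^{d})^{3}$.

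First I would apply Prokhorov's theorem to extract a sequence $\varepsilon _{n}\downarrow 0$ along which $(X^{x,\varepsilon _{n}},A^{\varepsilon _{n}},\varepsilon _{n}W)$ converges in law to a triple $(X^{x},A,0)$, the last coordinate being degenerate because $\varepsilon _{n}W\rightarrow 0$ in probability in $C$. The deterministic relation $X^{x,\varepsilon _{n}}=x+A^{\varepsilon _{n}}+\varepsilon _{n}W$ passes to the limit by continuity of addition, giving $X^{x}=x+A$ almost surely; moreover $A$ inherits the bound $\mathrm{Lip}(A)\leq \Vert b\Vert _{\infty }$, so $t\mapsto X^{x}(t)$ is almost surely absolutely continuous with $|\dot{X}^{x}(t)|\leq \Vert b\Vert _{\infty }$ for a.e.\ $t$. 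It then remains to identify the drift, i.e.\ to prove that $A(t)=\int_{0}^{t}b(X^{x}(s))\,\mathrm{d}s$ for all $t\geq 0$ almost surely, which yields $\dot{X}^{x}(t)=b(X^{x}(t))$ for a.e.\ $t$ and hence that $X^{x}$ solves (1.1).

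For the identification I would use the Skorokhod representation theorem to realize $(X^{x,\varepsilon _{n}},A^{\varepsilon _{n}})\rightarrow (X^{x},A)$ almost surely, uniformly on compacts, on an auxiliary probability space, keeping the relation $A^{\varepsilon _{n}}(t)=\int_{0}^{t}b(X^{x,\varepsilon _{n}}(s))\,\mathrm{d}s$ (valid for every $t$, a.s., since it is a measurable functional identity between the two paths and both sides are continuous in $t$). If $b$ is continuous and bounded this finishes the proof at once: $b(X^{x,\varepsilon _{n}}(s))\rightarrow b(X^{x}(s))$ for every $s$, the integrands are uniformly bounded, and dominated convergence gives $A(t)=\lim_{n}\int_{0}^{t}b(X^{x,\varepsilon _{n}}(s))\,\mathrm{d}s=\int_{0}^{t}b(X^{x}(s))\,\mathrm{d}s$. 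The main obstacle is the case where $b$ is only bounded and measurable (as in [5]), because then pointwise convergence of the integrands fails; there one must exploit the non-degeneracy of each approximating SDE to obtain a Krylov-type bound $\mathbb{E}\int_{0}^{T}|f|\bigl(X^{x,\varepsilon _{n}}(s)\bigr)\,\mathrm{d}s\leq C(T,\varepsilon _{n},d)\,\Vert f\Vert _{L^{d+1}(\mathbb{R}^{d})}$, approximate $b$ in $L_{\mathrm{loc}}^{d+1}$ by continuous functions $b_{m}$, pass to the limit first in $n$ and then in $m$, and control the resulting error — the delicate point being the dependence of the Krylov constant on $\varepsilon _{n}$, which forces one to argue via the occupation measure of the limit process (shown to be absolutely continuous) rather than with the $\varepsilon _{n}$-estimates directly. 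This is exactly the mechanism of Theorem 4 in [5]. Once $A(t)=\int_{0}^{t}b(X^{x}(s))\,\mathrm{d}s$ is established for all $t$ (first on a countable dense set, then for all $t$ by continuity), $X^{x}$ almost surely solves (1.1); since ``being a solution of (1.1)'' is a measurable property of a path, this conclusion transfers back through the equality in law, completing the proof.
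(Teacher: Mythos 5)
Your sketch is correct: under the paper's standing assumption (H1) ($b$ continuous and bounded) the tightness--Prokhorov--Skorokhod argument, with the integral identity $A^{\varepsilon_n}(t)=\int_0^t b(X^{x,\varepsilon_n}(s))\,\mathrm{d}s$ transferred through the representation and closed by dominated convergence, does prove Proposition 2, and this is essentially the same mechanism the paper relies on (it imports the statement from Theorem 4 of [5] without proof and reuses exactly this tightness/Skorokhod construction in Step 1 of the proof of Lemma 12). The part you only gesture at --- bounded merely measurable drift via Krylov-type estimates and the occupation measure of the limit, as in [5] --- is not needed in the continuous setting in which the paper applies the proposition, so no gap remains for the statement as used here.
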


We will be interested in knowing which solutions of (\ref{1.1}) are the
limit solutions of (\ref{1.2}) under continuity of $b$. Our main aim is to
provide a more precise description to the limit (for instance in Example \ref%
{exa1}, the constant solution $\xi \left( t\right) \equiv 0$ is a solution
to ODE (\ref{1.3}) but it is not in the support of the limit process).

To our best knowledge, the existing techniques of one-dimensional case are
based on the explicit computation of the laws of $X^{\varepsilon }\left(
\cdot \right) $. Indeed, in \cite{BB, Ba}, the laws are computed by studying
the Boundary Value Problems. In \cite{G, S}, laws are explicitly derived
from the large deviation theory. However, in our multi-dimensional case,
explicit computations of laws are hardly possible. The works introduced
above \cite{df14, pp18, pp20, pp23, pp18spl,T} are basing on large and
complex assumptions and calculations which is quite often restricted to
handel the multidimensional case generally. In this paper, we develop a
simple method to find the limiting solution by means of the properties of
drift $b$. As claimed before, it is not easy to define the \textquotedblleft
fast\textquotedblright\ leaving solutions in certain sense. One of the
possible strategies relies on the notion of fastest leaving solutions from a
compact set $K$ for the ODE initialized at the singularity, for instance, $%
K=[-r,r],r>0,$ in \cite{BB}. In this framework, the notion of fastest
solutions is well-defined: it corresponds to the shortest time needed to
reach the boundary of $K$. The whole point is then to identify this shortest
exit time with the limit of the exit times of the solutions of the SDEs as
the viscosity tends to zero. However, this is rather difficult as known
examples show that the points of the boundary of $K$ that can be reached in
the fastest way are not the only ones to be reached by the limiting
solutions of the vanishing SDEs. One effective trick is then to normalize $K$
in such a way that all the points of the boundary are reached in the same
time. In this framework, it is indeed possible to give a relevant notion of
fastest solutions. Nevertheless, how to normalize $K$ is also a intractable
problem except for a few special concrete circumstances.

Our main result shows that the limit solutions of (\ref{1.2}) are processes
whose support is in the \textquotedblleft $leaving$ $solution$%
\textquotedblright\ set (Filippv sense) of ODE (\ref{1.1}) defined as the
set of solutions to (\ref{1.1}) which start from zero and leave the origin
as fast as possible. Actually, they may form a curved surface with zero
initial point (see Example \ref{ex:highdim} for more details). Our result
extends Theorem 4.1 in \cite{BB} for one-dimensional case to
multi-dimensional case. In fact, whenever $\varepsilon >0,$ the singularity
of drift $b$ at point zero is excluded immediately. Indeed, our objective is
to prove that the limiting solutions of the SDEs are supported by the set of
solutions to the corresponding ODE that are the fastest ones to escape from
the singularity. Intuitively, this is well understood: The system lives for
some time in the neighborhood of the singularity. Under the action of the
noise, it visits for some time the entire local vicinity. Consequently, the
fastest solutions are then the solutions that are the most likely to cancel
the effect of the noise and to take the system away from the singularity.
Unfortunately, it turns out to be more challenging to write in a rigorous
way. The word \textquotedblleft fastest\textquotedblright\ is indeed not so
easy to define mathematically expect maybe in the $1$ dimensional setting.

In practical applications, it is critical to identify which of these
solutions is "physically meaningful" or "dynamically stable". A canonical
approach to resolve such non-uniqueness is the zero-noise limit (also known
as the small noise approximation): introduce a small stochastic perturbation
to the ODE to form an SDE, analyze the behavior of the SDE solutions for
small noise intensities, and show that the limit as the noise vanishes
selects a ODE solution \cite{BB, Ba, BS, dfv14, Fr, MMP}. Additionally, a
central question is the characterization of the weak limit distribution $\mu
^{0}=\lim_{\varepsilon \rightarrow 0}\mathcal{L}(X_{t}^{\varepsilon }),$ as
the noise intensity $\varepsilon \rightarrow 0$, particularly the geometric
structure of its support.

\paragraph{Main Contributions.}

Our results highlight the role of stochastic stability in resolving
deterministic non-uniqueness: delayed escape solutions are inherently
unstable under non-degenerate random perturbations, while instantaneous
escape solutions are robust and form the unique limit of the perturbed SDE
sequence. More precisely, this paper makes the following key contributions
to the analysis of measurable drift coefficients in high-dimensional SDE:

\begin{itemize}
\item We clarify that the instantaneous escape solutions---defined within
the framework of Filippov solutions---arise as the zero-noise limit of
stochastic trajectories and play a decisive role in shaping the support and
structure of the weak limiting distribution.

\item We prove that the support of the zero-noise limiting distribution
coincides precisely with the closure of the set of points attainable at time
$t$ by instantaneous escape Filippov solutions. In contrast, delayed
solutions---which remain at the origin for a positive (though finite)
duration before departing---contribute negligibly to the support in the
geometric (Lebesgue) sense.

%\item The comparison theorem further verifies the robustness of the
%zero-noise limit, ensuring that the weak convergence of $X_{t}^{\varepsilon }
%$ to $\mu ^{0}$ holds uniformly under small perturbations of the drift $b$.

\item The law of the iterated logarithm of Brownian motion is applied to
characterize the small---scale fluctuations of $X_t^\varepsilon$ as $%
\varepsilon \to 0$, thereby revealing the asymptotic trajectory behavior of
the diffusion process in a neighborhood of deterministic
solutions---including both classical ODE trajectories and instantaneous
escape Filippov solutions.

\item The support of the limiting distribution $\mu^0$ has Hausdorff
dimension strictly less than the ambient space dimension $d$, implying that $%
\mu^0$ is singular with respect to Lebesgue measure. Moreover, we show that
this support is compact but not necessarily connected; its geometric
structure is fully determined by the deterministic drift vector field $b$
and the family of instantaneous escape Filippov solutions---rendering it
independent of both the Brownian perturbation and the ambient dimension $d$.

\item We resolve the sample-point selection problem by introducing a
stopping-time technique. Concurrently, we derive a rigorous estimate for the
exit time under explicitly stated assumptions.
\end{itemize}

\paragraph{Organization of the Paper.}

%The rest of the paper is organized as follows: Section \ref{sect2} sets up
%the mathematical framework, including key definitions, assumptions, and
%solution concepts, and presenting concrete scalar and high-dimensional
%examples illustrating Osgood non-uniqueness advance. Section \ref{sect3}
%analyzes the radial process of the SDE solution and establishes stochastic
%dominance by a one-dimensional comparison process. Section \ref{sect4}
%proves the main result on solution selection via zero-noise limits with two examples . Section
%\ref{sect5} studies the support of limit solution and its geometric structure.
%Section \ref{sect6} addresses the selection of stemple point. After that, we establish the estimate of exit time in Section \ref{sect7}. Related applications and future works are stated in Section \ref{sect8} and \ref{sect9}, respectively.

The remainder of this paper is structured as follows: Section \ref{sect2}
introduces the mathematical framework, presenting key definitions, standing
assumptions, and solution concepts, followed by scalar and high-dimensional
examples that illustrate the phenomenon of Osgood-type non-uniqueness.
Section \ref{sect3} analyzes the radial component of the stochastic
differential equation (SDE) solution and establishes stochastic dominance
via a one-dimensional comparison process. Section \ref{sect4} proves the
main result on solution selection through zero-noise limits under measurable
and bounded drift, accompanied by two illustrative examples. Section \ref%
{sect5} characterizes the support of the limiting solution and examines its
geometric structure. Section \ref{sect6} addresses the selection of the stem
point---a critical singularity in the drift field. Section \ref{sect7}
derives sharp estimates for the exit time from a neighborhood of the origin.
Finally, Section \ref{sect8} discusses relevant applications, and Section %
\ref{sect9} outlines promising directions for future research. Preliminary
definitions and auxiliary results are collected in Appendix \ref{app}.

%discusses conclusions and future research directions.

\section{Preliminaries and Problem Setup}

\label{sect2}

Let $(\Omega ,\mathcal{F},\{\mathcal{F}_{t}\}_{t\geq 0},\mathbb{P})$ be a
filtered probability space satisfying the usual conditions (right-continuity
and completeness). Let $W_{t}=(W_{t}^{1},\dots ,W_{t}^{d})^{T}$ denote a $d$%
-dimensional standard $\mathcal{F}_{t}$-Brownian motion on this space. For $%
x\in \mathbb{R}^{d}\setminus \{0\}$, let $u(x)=x/\left\vert x\right\vert $
denote the unit radial vector.

We use the following standard notation for stochastic processes:

\begin{itemize}
\item A process $X_t$ is a \textit{semimartingale} if it can be decomposed
as $X_t = X_0 + M_t + A_t$, where $M_t$ is a local martingale and $A_t$ is a
finite variation process.

\item For a semimartingale $X_t$, $\langle X \rangle_t$ denotes its
quadratic variation process.

%\item A sequence of probability measures $\{\mu _{\varepsilon }\}$ on $%
%C([0,T],\mathbb{R}^{d})$ converges weakly to $\mu _{0}$ (written $\mu
%_{\varepsilon }\overset{\mathrm{w}}{\longrightarrow }\mu _{0}$) if $\int
%fd\mu _{\varepsilon }\rightarrow \int fd\mu _{0}$ for all bounded continuous
%functions $f:C([0,T],\mathbb{R}^{d})\rightarrow \mathbb{R}$.
\end{itemize}

We consider a family of SDEs parameterized by noise intensity $\varepsilon
>0 $:
\begin{equation}
dX_{t}^{\varepsilon }=b(X_{t}^{\varepsilon })dt+\varepsilon dW_{t},\quad
X_{0}^{\varepsilon }=0,  \label{2.1}
\end{equation}%
and the corresponding unperturbed ODE (zero-noise limit):
\begin{equation}
\dot{x}(t)=b(x(t)),\quad x(0)=0.  \label{2.2}
\end{equation}
Before introducing the general condition, the following assumptions form the
foundation of our analysis (cf. \cite{os1898}):

\begin{enumerate}
\item[\textbf{(A1)}] The drift $b:\mathbb{R}^{d}\rightarrow \mathbb{R}^{d}$
is continuous, $b(0)=0$, and $0\notin \Int\big \{\left. x\in \mathbb{R}%
^{d}\right\vert b\left( x\right) =0\big \}$.

\item[\textbf{(A2)}] [\textbf{Osgood Non-Uniqueness}]\label{ass:osgood}
There exists $\delta >0$ and a continuous function $\omega :(0,\delta
]\rightarrow (0,\infty )$, called a modulus, such that $\omega \left(
0\right) =0,$ $\left\vert b\left( x\right) -b\left( y\right) \right\vert
\leq \omega \left( \left\vert x-y\right\vert \right) ,$ $\forall x,y\in
\mathbb{R}^{d}$, $\int_{0}^{\delta }\frac{dr}{\omega (r)}<\infty ,$ and $%
\left\langle b(x),x\right\rangle \geq -\omega (\left\vert x\right\vert
)\left\vert x\right\vert $ for all $x\in B_{\delta }(0)$ (the open ball of
radius $\delta $ centered at the origin), $.$
\end{enumerate}

\begin{remark}
Indeed, suppose that $0\in \Int\left\{ \left. x\right\vert b\left( x\right)
=0\right\} ,$ the constant solution is the only solution to (\ref{2.2}) with
$x\left( t\right) =0,$ $t\geq 0.$ Actually, (A1) means that, not only the
zero is one of the solutions of the ODE (\ref{1.1}), but also there may
exist many other non-zero solutions.

The inequality
\begin{equation*}
\langle b(x),x\rangle \geq -\omega (|x|)|x|
\end{equation*}%
gives the lower bound of the inner product, and its physical meaning is:
Even if the vector field $b(x)$ has a tendency to push outward, the
\textquotedblleft intensity\textquotedblright\ of its outward push is
strictly limited by a function $\omega (|x|)$. Specifically, this condition
allows $\langle b(x),x\rangle $ to be negative (i.e., the system has a
stable tendency of inward contraction), and also allows it to be positive
(i.e., the system has an unstable tendency of outward diffusion).
\end{remark}

When the conditions (A1)-(A2) are satisfied, Osgood non-uniqueness arises:
the ODE (\ref{2.1}) admits multiple distinct solutions from the origin,
including:

\begin{itemize}
\item The trivial zero solution: $x(t)\equiv 0$ for all $t\geq 0$;

\item Instantaneous escape solutions: $x(t)\neq 0$ for all $t>0$;

\item Delayed escape solutions: There exists $T>0$ such that $x(t)=0$ for $%
t\in \lbrack 0,T]$ and $x(t)\neq 0$ for $t>T$.
\end{itemize}

\begin{remark}
Assumption (A2) is a radial Osgood condition that only restricts the radial
component of the drift $b(x)$ near the origin. This is sufficient to
characterize escape behavior from the origin while allowing arbitrary
angular behavior-an important generalization to high dimensions. The term $%
\varepsilon dW_{t}$ ensures the Brownian perturbation acts non-trivially in
all directions, which is critical for excluding delayed escape solutions
from the zero-noise limit.
\end{remark}

\begin{remark}
\label{re1} %Although we suppose that the drift $b$ is continuis at point
%zero, we can find some examples who are discontinuous at point zero, for
%instance, $b_{1}\left( x\right) =$sign$\left( x\right) ,$ and $b_{2}\left(
%\begin{bmatrix}
%x \\
%y%
%\end{bmatrix}%
%\right) =%
%\begin{bmatrix}
%2\text{sign}(x)\sqrt{|y|} \\
%2\text{sign}(y)\sqrt{|x|}%
%\end{bmatrix}%
%,$ where sign$\left( x\right) =\left\{
%\begin{array}{cc}
%1, & if\text{ }x>0; \\
%0, & if\text{ }x=0; \\
%-1, & \text{otherwise.}%
%\end{array}%
%\right. $ $x,y\in \mathbb{R}.$ But $\left\langle b_{1}\left( x\right)
%,x\right\rangle =\left\vert x\right\vert \geq 0,$ and $\left\langle
%b_{2}\left( \left[
%\begin{array}{c}
%x \\
%y%
%\end{array}%
%\right] \right) ,\left[
%\begin{array}{c}
%x \\
%y%
%\end{array}%
%\right] \right\rangle =2\left( \left\vert x\right\vert \sqrt{\left\vert
%y\right\vert }+\left\vert y\right\vert \sqrt{\left\vert x\right\vert }%
%\right) \geq 0,$ which means the discontinuous point disappears under $%
%\left\langle b\left( x\right) ,x\right\rangle .$

Although the drift coefficient $b$ is assumed to be continuous at the
origin, there exist counterexamples that exhibit discontinuity at zero. For
instance, consider
\begin{equation}
b_{1}(x)=\sgn(x),\quad b_{2}\!%
\begin{pmatrix}
x \\
y%
\end{pmatrix}%
=2%
\begin{pmatrix}
\sgn(x)\sqrt{|y|} \\
\sgn(y)\sqrt{|x|}%
\end{pmatrix}%
,  \label{b1b2}
\end{equation}%
where the sign function is defined as
\begin{equation*}
\sgn(x)=%
\begin{cases}
1, & x>0, \\
0, & x=0, \\
-1, & x<0.%
\end{cases}%
\end{equation*}%
Here, $x,y\in \mathbb{R}$. Notably, the inner products satisfy
\begin{equation*}
\langle b_{1}(x),x\rangle =|x|\geq 0,\quad \left\langle b_{2}\!%
\begin{pmatrix}
x \\
y%
\end{pmatrix}%
,%
\begin{pmatrix}
x \\
y%
\end{pmatrix}%
\right\rangle =2\big(|x|\sqrt{|y|}+|y|\sqrt{|x|}\big)\geq 0,
\end{equation*}%
indicating that, although $b_{1}$ and $b_{2}$ are discontinuous at the
origin, the composite expression $\langle b(x),x\rangle $ remains
continuous---and indeed nonnegative---at zero.
\end{remark}

We define the key solution concepts for both SDEs and ODEs:

\begin{definition}[Weak Solution to SDE]
A pair $(X_t^\varepsilon, W_t)$ is a \textit{weak solution} to the SDE (2.1)
if:

\begin{itemize}
\item $X_{t}^{\varepsilon }$ is an $\mathcal{F}_{t}$-adapted continuous
process on $\mathbb{R}^{d}$;

\item $W_{t}$ is an $\mathcal{F}_{t}$-Brownian motion;

\item For all $t\geq 0$,
\begin{equation*}
X_{t}^{\varepsilon }=\int_{0}^{t}b(X_{s}^{\varepsilon })ds+\varepsilon
W_{t}\quad \mathbb{P}\text{-a.s.}
\end{equation*}
\end{itemize}
\end{definition}

\begin{definition}[ODE Solution Types]
For the ODE (\ref{2.2}):

\begin{itemize}
\item A solution $x(t)$ is the \textit{trivial zero solution} if $x(t)\equiv
0$ for all $t\geq 0$.

\item A solution $x(t)$ is an \textit{instantaneous escape solution} if $%
x(t)\neq 0$ for all $t>0$.

\item A solution $x(t)$ is a \textit{delayed escape solution} if there
exists $T>0$ such that $x(t)=0$ for $t\in \lbrack 0,T]$ and $x(t)\neq 0$ for
$t>T$.
\end{itemize}
\end{definition}

To illustrate the radial Osgood condition and solution non-uniqueness, we
present two concrete examples one scalar and one high-dimensional that
satisfy our key assumptions.

\begin{example}[Scalar Osgood Non-Uniqueness]
\label{ex:scalar} Consider the one-dimensional ODE:
\begin{equation*}
\dot{x}(t)=-\omega (\left\vert x(t)\right\vert )\text{sign}(x(t)),\quad
x(0)=0,
\end{equation*}%
where $\omega (r)=r^{\alpha }$ for $0<\alpha <1$. The Osgood integral
condition becomes:
\begin{equation*}
\int_{0}^{\delta }\frac{dr}{\omega (r)}=\int_{0}^{\delta }r^{-\alpha }dr=%
\frac{\delta ^{1-\alpha }}{1-\alpha }<\infty ,
\end{equation*}%
so Osgood uniqueness fails. This ODE admits three distinct types of
solutions: 1) Trivial zero solution: $x(t)\equiv 0$ for all $t\geq 0$. 2)
Instantaneous escape solutions: For any $c>0$, the solution
\begin{equation*}
x_{c}(t)=\left( c(1-\alpha )t\right) ^{1/(1-\alpha )}
\end{equation*}%
satisfies $x_{c}(t)>0$ for all $t>0$ and solves the ODE. 3) Delayed escape
solutions: For any $T>0$, the solution
\begin{equation*}
x_{T}(t)=%
\begin{cases}
0 & \text{if }t\in \lbrack 0,T], \\
\left( (1-\alpha )(t-T)\right) ^{1/(1-\alpha )} & \text{if }t>T%
\end{cases}%
\end{equation*}%
remains at the origin for $[0,T]$ before escaping. Now consider the
perturbed SDE:
\begin{equation*}
dX_{t}^{\varepsilon }=-\omega (\left\vert X_{t}^{\varepsilon }\right\vert )%
\text{sign}(X_{t}^{\varepsilon })dt+\varepsilon dW_{t},\quad
X_{0}^{\varepsilon }=0.
\end{equation*}%
By our subsequent analysis (Theorem \ref{thm:main}), the SDE solution $%
X_{t}^{\varepsilon }$ satisfies $\mathbb{P}(X_{t}^{\varepsilon }>0\text{ for
all }t>0)=1$ for any $\varepsilon >0$. In the zero-noise limit $\varepsilon
\rightarrow 0$, the SDE solutions converge to the instantaneous escape
solutions of the ODE, excluding the delayed escape solutions and trivial
zero solution.
\end{example}

\begin{example}[High-Dimensional Radial Osgood]
\label{ex:highdim} Let $d\geq 2$ and consider the high-dimensional ODE:
\begin{equation*}
\dot{x}(t)=-\omega (\left\vert x(t)\right\vert )\frac{x(t)}{\left\vert
x(t)\right\vert },\quad x(0)=0,\quad x(t)\neq 0,
\end{equation*}%
with $b(0)=0$ and Osgood modulus:
\begin{equation*}
\omega (r)=\frac{r}{\left\vert \log r\right\vert },\quad r\in (0,\delta ].
\end{equation*}%
The Osgood integral condition is:
\begin{equation*}
\int_{0}^{\delta }\frac{dr}{\omega (r)}=\int_{0}^{\delta }\frac{\left\vert
\log r\right\vert }{r}dr=\frac{1}{2}(\log \delta )^{2}<\infty ,
\end{equation*}%
confirming Osgood non-uniqueness.

The radial Osgood condition (Assumption (A2)) holds with equality:
\begin{equation*}
\left\langle b(x),x\right\rangle =-\omega (\left\vert x\right\vert )\frac{%
\left\langle x,x\right\rangle }{\left\vert x\right\vert }=-\omega
(\left\vert x\right\vert )\left\vert x\right\vert .
\end{equation*}%
For the perturbed SDE:
\begin{equation*}
dX_{t}^{\varepsilon }=b(X_{t}^{\varepsilon })dt+\varepsilon \sigma
dW_{t},\quad X_{0}^{\varepsilon }=0,
\end{equation*}%
where $\sigma $ is a constant $d\times d$ matrix satisfying Assumption (A3)
(e.g., $\sigma =I_{d}$, the identity matrix), our main results (Proposition %
\ref{pro3}) and Theorem \ref{thm:main} imply: 1) For any fixed $\varepsilon
>0$, $\mathbb{P}(\left\vert X_{t}^{\varepsilon }\right\vert >0\text{ for all
}t>0)=1$; 2) As $\varepsilon \rightarrow 0$, the SDE solutions converge
weakly to an instantaneous escape solution of the ODE; 2) Delayed escape
solutions and the trivial zero solution are not recoverable as limits of the
SDE solution sequence.
\end{example}

\section{Radial Process Analysis and Stochastic Comparison}

\label{sect3}

The core of our analysis is the radial process $R_t^\varepsilon = \left|
X_t^\varepsilon \right|$, which captures the distance of the SDE solution
from the origin. We first derive the dynamics of this radial process and
then construct a one-dimensional comparison process that stochastically
dominates it from below.

\subsection{Dynamics of the Radial Process}

Since the function $\left\vert x\right\vert $ is not twice continuously
differentiable at the origin (a critical technical obstacle), we use the
standard technique of applying It\^{o}'s formula to the smooth function $%
\left\vert x\right\vert ^{2}$ first, then differentiating to obtain the
dynamics of $R_{t}^{\varepsilon }=\sqrt{\left\vert X_{t}^{\varepsilon
}\right\vert ^{2}}$.

\begin{lemma}[Radial SDE]
\label{lem:radial_sde} Let $X_{t}^{\varepsilon }$ be a weak solution to the
SDE (\ref{2.1}) satisfying Assumptions \emph{(A1)-(A2)}. For any $t>0$, the
radial process $R_{t}^{\varepsilon }=\left\vert X_{t}^{\varepsilon
}\right\vert $ satisfies the following stochastic differential inequality in
the sense of semimartingales:
\begin{equation}
dR_{t}^{\varepsilon }\geq \left( -\omega (R_{t}^{\varepsilon })+\frac{%
\varepsilon ^{2}c_{0}}{2R_{t}^{\varepsilon }}\right) dt+\varepsilon d\beta
_{t},  \label{4.1}
\end{equation}%
where $c_{0}=(d-1)>0$ and $\beta _{t}$ is a one-dimensional standard
Brownian motion defined on $\left( \Omega ,\mathcal{F},\mathbb{P}\right) $.
\end{lemma}

%If we consider equal mark in SDE (\ref{4.1}), we can define a process $%
%r_{t}^{\varepsilon }$ witch can be regarded as a comparison process for $%
%R_{t}^{\varepsilon }.$ Its property is also interesting in its own. We
%display this content in (\ref{sc}) in Appendix \ref{app}.

\begin{remark}
If we consider the case of identical mark in the SDE (\ref{4.1}), we define
a comparison process $r_t^\varepsilon$, which serves as a radial lower (or
upper) bound for $R_t^\varepsilon$. Its asymptotic and regularity properties
are of independent interest. This construction is detailed in subsection \ref%
{sc} of Appendix \ref{app}.
\end{remark}

\begin{proof}
We proceed in four steps: Step 1: It\^{o}'s Formula for $\left\vert
X_{t}^{\varepsilon }\right\vert ^{2}$\textbf{$.$} By Assumption (A3), the
diffusion matrix $a(x)$ is continuous, so $\left\vert X_{t}^{\varepsilon
}\right\vert ^{2}$ is a semimartingale. Applying It\^{o}'s formula%
\begin{equation*}
d(\left\vert X_{t}^{\varepsilon }\right\vert ^{2})=2\left\langle
X_{t}^{\varepsilon },b\left( X_{t}^{\varepsilon }\right) \right\rangle
dt+2\varepsilon \left\langle X_{t}^{\varepsilon },dW_{t}\right\rangle
+\varepsilon ^{2}ddt.
\end{equation*}%
Step 2: It\^{o}'s Formula for $R_{t}^{\varepsilon }.$ For $%
X_{t}^{\varepsilon }\neq 0$, the function $f(r)=\sqrt{r}$ is $C^{2}$ on $%
(0,\infty )$. By the chain rule for semimartingales
\begin{equation*}
dR_{t}^{\varepsilon }=\frac{1}{2R_{t}^{\varepsilon }}d(\left\vert
X_{t}^{\varepsilon }\right\vert ^{2})-\frac{1}{8(R_{t}^{\varepsilon })^{3}}%
d\langle \left\vert X_{t}^{\varepsilon }\right\vert ^{2}\rangle _{t},
\end{equation*}%
where $\langle \left\vert X_{t}^{\varepsilon }\right\vert ^{2}\rangle _{t}$
denotes the quadratic variation process of $\left\vert X_{t}^{\varepsilon
}\right\vert ^{2}$. The quadratic variation is
\begin{equation*}
d\langle \left\vert X_{t}^{\varepsilon }\right\vert ^{2}\rangle
_{t}=4\varepsilon ^{2}X_{t}^{\varepsilon }{}^{T}\cdot X_{t}^{\varepsilon }dt.
\end{equation*}%
Substituting back, we obtain the full dynamics of $R_{t}^{\varepsilon }$
\begin{equation}
dR_{t}^{\varepsilon }=A_{\varepsilon }(t)dt+M_{\varepsilon }(t),  \label{rd}
\end{equation}%
where the drift term $A_{\varepsilon }(t)$ is%
\begin{equation}
A_{\varepsilon }(t)=\frac{\left\langle X_{t}^{\varepsilon },b\left(
X_{t}^{\varepsilon }\right) \right\rangle }{R_{t}^{\varepsilon }}%
+\varepsilon ^{2}\cdot \frac{d-\frac{X_{t}^{\varepsilon }{}^{T}\cdot
X_{t}^{\varepsilon }}{(R_{t}^{\varepsilon })^{2}}}{2R_{t}^{\varepsilon }},
\label{rdd}
\end{equation}%
and the martingale term $M_{\varepsilon }(t)$ is%
\begin{equation*}
M_{\varepsilon }(t)=\varepsilon \cdot \frac{\left\langle X_{t}^{\varepsilon
},dW_{t}\right\rangle }{R_{t}^{\varepsilon }}.
\end{equation*}%
Step 3: Bounding the Drift Term. We bound the two components of the drift
term separately. By Assumption (A2), the radial component of the drift
satisfies
\begin{equation*}
\frac{\left\langle X_{t}^{\varepsilon },b\left( X_{t}^{\varepsilon }\right)
\right\rangle }{R_{t}^{\varepsilon }}\geq -\omega (R_{t}^{\varepsilon }).
\end{equation*}%
Let $u=X_{t}^{\varepsilon }/R_{t}^{\varepsilon }$ (unit radial vector).
Extend $u$ to an orthonormal basis $\{u,e_{2},\dots ,e_{d}\}$ of $\mathbb{R}%
^{d}$. By orthogonality, the trace term decomposes as
\begin{equation*}
d-u^{T}u=\sum_{k=2}^{d}e_{k}^{T}e_{k}.
\end{equation*}%
Each term in the sum satisfies $e_{k}^{T}e_{k}=\left\vert e_{k}\right\vert
^{2}=1$. Summing over $d-1$ terms gives
\begin{equation*}
d-\frac{X_{t}^{\varepsilon }{}^{T}\cdot X_{t}^{\varepsilon }}{%
(R_{t}^{\varepsilon })^{2}}\geq d-1=c_{0}.
\end{equation*}%
Combining these bounds, the drift term satisfies
\begin{equation*}
A_{\varepsilon }(t)\geq -\omega (R_{t}^{\varepsilon })+\frac{\varepsilon
^{2}c_{0}}{2R_{t}^{\varepsilon }}.
\end{equation*}%
Step 4: Martingale Term. The quadratic variation of the martingale term $%
M_{\varepsilon }(t)$ is
\begin{equation*}
d\langle M_{\varepsilon }\rangle _{t}=\varepsilon ^{2}\cdot \frac{%
X_{t}^{\varepsilon }{}^{T}\cdot X_{t}^{\varepsilon }}{(R_{t}^{\varepsilon
})^{2}}dt=\varepsilon ^{2}dt.
\end{equation*}%
By the martingale representation theorem, there exists a one-dimensional
standard Brownian motion $\beta _{t}$ such that:
\begin{equation*}
M_{\varepsilon }(t)=\varepsilon d\beta _{t},
\end{equation*}%
from which, we obtain the desired result (\ref{4.1}).
\end{proof}

\begin{remark}
In general, we assume that the diffusion matrix $a(x) =
\sigma(x)\sigma(x)^{\top}$ is uniformly elliptic on $\mathbb{R}^{d}$: there
exists a constant $\lambda > 0$ such that
\begin{equation*}
\xi^{\top} a(x) \xi \geq \lambda |\xi|^2 \quad \text{for all } x \in \mathbb{%
R}^{d} \text{ and } \xi \in \mathbb{R}^{d}.
\end{equation*}
Moreover, the diffusion coefficient $\sigma : \mathbb{R}^{d} \to \mathbb{R}%
^{d \times d}$ is bounded and continuous. Under these conditions, the main
results extend---mutatis mutandis---to this setting.

%As a matter of fact, we are able to consider the diffusion matrix is
%uniformly elliptic on $\mathbb{R}^{d}$: there exists a constant $\lambda >0$
%such that for all $x\in \mathbb{R}^{d}$ and all $\xi \in \mathbb{R}^{d}$, $%
%\xi ^{T}a(x)\xi \geq \lambda \left\vert \xi \right\vert ^{2}.$ Additionally,
%$\sigma :\mathbb{R}^{d}\rightarrow \mathbb{R}^{d\times d}$ is bounded and
%continuous. By similar approach, we can get similar result.
\end{remark}

We now analyze the behavior of the SDE solution sequence $%
\{X_t^\varepsilon\} $ as $\varepsilon \to 0$ (zero-noise limit) and prove
our main result: the only ODE solutions recoverable as limits of the SDE
sequence are the instantaneous escape solutions.

First, we establish tightness of the SDE solution sequence, which guarantees
the existence of a weakly convergent subsequence (a prerequisite for
analyzing the zero-noise limit).

\begin{proposition}[Tightness]
\label{prop:tightness} Under Assumptions \emph{(A1)-(A2)}, the sequence of
probability laws $\{\mathbb{P}\circ (X_{t}^{\varepsilon
})^{-1}\}_{\varepsilon >0}$ is tight on the path space $C([0,T],\mathbb{R}%
^{d})$ for any fixed $T>0$.
\end{proposition}

\begin{proof}[Sketch]
We first show that the family of laws $\bigl\{P\circ \bigl(X^{x,\varepsilon
}(\cdot ),W(\cdot )\bigr)^{-1},\varepsilon >0\bigr\}$ is tight. Using the
usual inequality and the stochastic integral theorem, we have
\begin{align*}
\mathbb{E}\bigl[\bigl|X^{x,\varepsilon }(t_{1})-X^{x,\varepsilon }(t_{2})%
\bigr|^{4}\bigr]& \leq 8\left\{ \mathbb{E}\left[ \left\vert
\int_{t_{1}}^{t_{2}}b\bigl(X^{x,\varepsilon }(s)\bigr)\,ds\right\vert ^{4}%
\right] +\varepsilon ^{4}\mathbb{E}\left[ \left\vert
\int_{t_{1}}^{t_{2}}dW(s)\right\vert ^{4}\right] \right\} \\
& \leq 8\bigl\{M_{b}^{4}|t_{2}-t_{1}|^{4}+M_{2}\varepsilon
^{4}|t_{2}-t_{1}|^{2}\bigr\},
\end{align*}%
where $M_{2}$ depends on $d$ and $M_{b}>0$ large enough such that $%
\left\vert b\left( x\right) \right\vert \leq M_{b}.$ Hence, by virtue of
Prokhorov's theorem (Theorem 4.7, Page 62 in \cite{ks91}), there exists a
sequence $\varepsilon _{n}\rightarrow 0^{+}$ such that
\begin{equation*}
\bigl\{P\circ \bigl(X^{x,\varepsilon _{n}}(\cdot ),W(\cdot )\bigr)^{-1}%
\bigr\}\rightarrow \bigl\{P\circ \bigl(X^{x}(\cdot ),W(\cdot )\bigr)^{-1}%
\bigr\},\quad \text{as }n\rightarrow +\infty .
\end{equation*}
The proof is thus complete.%
%Tightness follows from the Arzel\`{a}-Ascoli theorem and moment bounds for
%SDEs \cite{billingsley1968convergence}. We verify the two key tightness
%criteria:
%
%Criterion 1: Bounded First Moments. Using the stochastic dominance result
%(Theorem \ref{thm:comparison}) and the Lyapunov function $V(r)=r^{2}$ for
%the comparison process $r_{t}^{\varepsilon }$, we show:
%\begin{equation*}
%\mathbb{E}\left[ \left\vert X_{t}^{\varepsilon }\right\vert ^{2}\right] =%
%\mathbb{E}\left[ (R_{t}^{\varepsilon })^{2}\right] \geq \mathbb{E}\left[
%(r_{t}^{\varepsilon })^{2}\right] \leq C(T),
%\end{equation*}%
%where $C(T)$ is a constant independent of $\varepsilon $ (for fixed $T>0$).
%
%Criterion 2: H\"{o}lder Continuity. By the Burkholder-Davis-Gundy inequality
%\cite{karatzas1991brownian}, the martingale part of the SDE solution
%satisfies:
%\begin{equation*}
%\mathbb{E}\left[ \left\vert X_{t}^{\varepsilon }-X_{s}^{\varepsilon
%}\right\vert ^{4}\right] \leq C(T)\left\vert t-s\right\vert ^{2}
%\end{equation*}%
%for all $0\leq s<t\leq T$, where $C(T)$ is independent of $\varepsilon $.
%
%These two criteria imply the sequence is tight on $C([0,T],\mathbb{R}^{d})$
%(see Theorem 7.3 in \cite{billingsley1968convergence}). Thus there exists a
%subsequence $\varepsilon _{n}\rightarrow 0$ and a continuous process $%
%X_{t}^{0}$ such that:
%\begin{equation*}
%X_{t}^{\varepsilon _{n}}\rightarrow X_{t}^{0}\quad \text{weakly on }C([0,T],%
%\mathbb{R}^{d})\text{ for all }T>0.
%\end{equation*}
\end{proof}

Next, we show the weak limit of the SDE solution sequence is a solution to
the unperturbed ODE (\ref{2.2}).

\begin{definition}[Weak Convergence]
A sequence of probability measures $\{\mu ^{\varepsilon }\}$ on $\mathbb{R}%
^{d}$ converges weakly to $\mu ^{0}$ (written $\mu ^{\varepsilon
}\Rightarrow \mu ^{0}$) if
\begin{equation*}
\int_{\mathbb{R}^{d}}f(x)\mu ^{\varepsilon }(dx)\rightarrow \int_{\mathbb{R}%
^{d}}f(x)\mu ^{0}(dx)
\end{equation*}%
for all bounded continuous functions $f:\mathbb{R}^{d}\rightarrow \mathbb{R}$%
.
\end{definition}

\begin{proposition}[Limit is an ODE Solution]
\label{prop:ode_limit} Let $X_{t}^{0}$ be the weak limit of the subsequence $%
X_{t}^{\varepsilon _{n}}$ (guaranteed by Proposition \ref{prop:tightness}).
Then $X_{t}^{0}$ satisfies the ODE (\ref{2.2}) for all $t\geq 0$ $\mathbb{P}$%
-a.s.
\end{proposition}

\begin{proof}[Sketch]
Rewrite the SDE (\ref{2.1}) in integral form:
\begin{equation}
X_{t}^{\varepsilon }=\int_{0}^{t}b(X_{s}^{\varepsilon })ds+\varepsilon
\int_{0}^{t}\sigma (X_{s}^{\varepsilon })dW_{s}.  \label{5.1}
\end{equation}%
We analyze the two terms on the right-hand side separately: \textit{%
Martingale Term}: By the Burkholder-Davis-Gundy inequality, the $L^{2}$-norm
of the martingale term satisfies:
\begin{equation*}
\mathbb{E}\left[ \left\vert \varepsilon \int_{0}^{t}\sigma
(X_{s}^{\varepsilon })dW_{s}\right\vert ^{2}\right] =\varepsilon ^{2}\mathbb{%
E}\left[ \int_{0}^{t}\text{tr}(a(X_{s}^{\varepsilon }))ds\right] \leq
C(T)\varepsilon ^{2}\rightarrow 0\quad \text{as }\varepsilon \rightarrow 0.
\end{equation*}%
Thus the martingale term vanishes in probability as $\varepsilon \rightarrow
0$. \textit{Drift Term}: By the continuity of the drift $b$ (Assumptions
(A1)-(A2) and the weak convergence of $X_{t}^{\varepsilon }$ to $X_{t}^{0}$,
the drift term converges weakly to $\int_{0}^{t}b(X_{s}^{0})ds.$

Taking the limit as $\varepsilon _{n}\rightarrow 0$ in (\ref{5.1}), we
obtain:
\begin{equation*}
X_{t}^{0}=\int_{0}^{t}b(X_{s}^{0})ds\quad \mathbb{P}\text{-a.s.},
\end{equation*}%
which is the integral form of the ODE (\ref{2.2}). Thus $X_{t}^{0}$ is a
solution to the unperturbed ODE.
\end{proof}

\section{Main Result: Selection of Instantaneous Escape Solutions}

\label{sect4}

We now prove our core result: the zero-noise limit selects only
instantaneous escape solutions, excluding delayed escape solutions and the
trivial zero solution.

For $R_{t}^{\varepsilon }>0$, the radial process $R_{t}^{\varepsilon }$
satisfies the following stochastic differential inequality:
\begin{equation}
dR_{t}^{\varepsilon }\geq \frac{\varepsilon ^{2}c_{0}}{2R_{t}^{\varepsilon }}%
dt+\varepsilon d\beta _{t},  \label{rd1}
\end{equation}%
where $c_{0}>0$ is a constant, $\frac{\varepsilon ^{2}c_{0}}{%
2R_{t}^{\varepsilon }}$ is the positive and singular repulsive term (tending
to $+\infty $ as $R_{t}^{\varepsilon }\rightarrow 0,$ therefore, we lose
sight of term $-\omega (R_{t}^{\varepsilon })<0,$ which tends to zero as $%
R_{t}^{\varepsilon }\rightarrow 0.$).

For any fixed $\varepsilon > 0$ and all $t > 0$, it holds that $\mathbb{P}%
\left( R_{s}^{\varepsilon} \equiv 0 \text{ for some } s \in (0,t] \right) =
0 $. This assertion can be established by contradiction: suppose, to the
contrary, that there exists a time $t_0 > 0$ such that $\mathbb{P}\left(
R_{t_0}^{\varepsilon} = 0 \right) > 0$; then inequality (\ref{rd1}) yields
an immediate contradiction. In the subsequent analysis, we shall derive a
strictly positive lower bound for $R_t^\varepsilon$---uniformly over compact
time intervals---by invoking the law of the iterated logarithm for standard
Brownian motion.

%Apparently, for any $\varepsilon >0$
%fixed, and $\forall t>0,$ we have $\mathbb{P}\left( R_{s}^{\varepsilon
%}\equiv 0,0<s\leq t\right) =0.$ This can be proved by contradiction. If
%there exists time $t_{0},$ such that $\mathbb{P}\left(
%R_{t_{0}}^{\varepsilon }\equiv 0\right) >0,$ then the inequality (\ref{rd1})
%leads to contradiction immediately. In the following work, we will establish the lower boundedness of $R_{t}^{\varepsilon }$ by Law of the iterated logarithm for Brownian Motion.

\begin{definition}[Immediate Escape Solution]
\label{ies}A nonnegative stochastic process $Y_{t}$ with $Y_{0}=0$ is called
an immediate escape solution if for any $t>0$, $\mathbb{P}(Y_{t}=0)=0$.
\end{definition}

We recall three key theorems (\cite{ks91, Bi68}) that form the basis of our
proof:

\begin{theorem}[Law of the Iterated Logarithm for Brownian Motion,
Small-Time Case]
\label{ll}For a standard Brownian motion $\beta _{t}$, we have almost
surely:
\begin{equation*}
\limsup_{t\rightarrow 0}\frac{\beta _{t}}{\sqrt{2t\log \log (1/t)}}=1.
\end{equation*}
\end{theorem}

\begin{theorem}[Comparison Theorem for Nonnegative SDEs]
\label{com}Let $Y_{t}$ and $Z_{t}$ be two nonnegative stochastic processes
satisfying the following SDEs for $t\geq 0$:
\begin{equation*}
dY_{t}=b_{1}(Y_{t})dt+\sigma (Y_{t})d\beta _{t},\quad Y_{0}=y_{0}\geq 0,
\end{equation*}%
\begin{equation*}
dZ_{t}=b_{2}(Z_{t})dt+\sigma (Z_{t})d\beta _{t},\quad Z_{0}=z_{0}\geq 0,
\end{equation*}%
where $b_{1},b_{2}:[0,+\infty )\rightarrow \mathbb{R}$ and $\sigma
:[0,+\infty )\rightarrow \mathbb{R}^{+}$ are measurable functions, and $%
b_{1}(x)\geq b_{2}(x)$ for all $x\geq 0$. If $y_{0}\geq z_{0}$, then $%
Y_{t}\geq Z_{t}$ almost surely for all $t\geq 0$.
\end{theorem}

\begin{theorem}[Portmanteau Theorem, open set version]
\label{pthe}Let $X_{n}$ and $X$ be random variables taking values in a
metric space $(S,\rho )$. If $X_{n}\Rightarrow X$ (weakly converges), then
for any open set $G\subset S$, we have:
\begin{equation*}
\mathbb{P}(X\in G)\leq \liminf_{n\rightarrow \infty }\mathbb{P}(X_{n}\in G).
\end{equation*}
\end{theorem}

We use the law of the iterated logarithm for Brownian motion and the
comparison theorem for nonnegative SDEs to establish a strict lower bound
for the radial process $R_{t}^{\varepsilon }$.

\begin{proposition}
\label{pro2}For any fixed $\varepsilon >0$, the radial process $%
R_{t}^{\varepsilon }$ satisfies almost surely:
\begin{equation}
\limsup_{t\rightarrow 0}\frac{R_{t}^{\varepsilon }}{\varepsilon \sqrt{2t\log
\log (1/t)}}\geq c>0,  \label{est2}
\end{equation}%
where $c$ is a positive constant independent of $\varepsilon $.
\end{proposition}

\begin{proof}
We proceed step by step to prove the proposition:

\noindent Step 1: Construct a Nonnegative Comparison Process. Consider the
comparison process $\tilde{R}_{t}^{\varepsilon }=\varepsilon \sigma
_{0}\beta _{t}$, which satisfies the SDE:
\begin{equation*}
d\tilde{R}_{t}^{\varepsilon }=\varepsilon d\beta _{t},\quad \tilde{R}%
_{0}^{\varepsilon }=0.
\end{equation*}%
Note that $\tilde{R}_{t}^{\varepsilon }$ can be positive or negative, but
the radial process $R_{t}^{\varepsilon }\geq 0$ by definition. To avoid the
loophole of negative lower bounds, we introduce the positive part of $\tilde{%
R}_{t}^{\varepsilon }$, denoted by $\tilde{R}_{t}^{\varepsilon ,+}=\max \{%
\tilde{R}_{t}^{\varepsilon },0\}$, which is a nonnegative stochastic process.

\noindent Step 2: Apply the Comparison Theorem \ref{com} for Nonnegative
SDEs. When $R_{t}^{\varepsilon }>0$, the singular repulsive term $\frac{%
\varepsilon ^{2}c_{0}}{2R_{t}^{\varepsilon }}>0$, so the drift term of $%
R_{t}^{\varepsilon }$ is non-negative. The diffusion term of $%
R_{t}^{\varepsilon }$ is $\varepsilon d\beta _{t}$, which is the same as
that of $\tilde{R}_{t}^{\varepsilon }$. By the comparison Theorem \ref{com}
for nonnegative SDEs, since $R_{0}^{\varepsilon }=\tilde{R}_{0}^{\varepsilon
}=0$ and the drift term of $R_{t}^{\varepsilon }$ is greater than or equal
to that of $\tilde{R}_{t}^{\varepsilon }$ (which is $0$), we have almost
surely for all $t\geq 0$:
\begin{equation*}
R_{t}^{\varepsilon }\geq \tilde{R}_{t}^{\varepsilon ,+}=\max \{\varepsilon
\beta _{t},0\}.
\end{equation*}%
\noindent Step 3: Substitute the Law of the Iterated Logarithm. By the law
of the iterated logarithm (Theorem \ref{ll}) for standard Brownian motion,
we have almost surely:
\begin{equation*}
\limsup_{t\rightarrow 0}\frac{\beta _{t}}{\sqrt{2t\log \log (1/t)}}=1.
\end{equation*}%
This implies that for any $\delta \in (0,1)$, there exists a sequence $%
\{t_{n}\}_{n=1}^{\infty }$ with $t_{n}\rightarrow 0$ (depending on the
Brownian path) such that:
\begin{equation*}
\beta _{t_{n}}\geq (1-\delta )\sqrt{2t_{n}\log \log (1/t_{n})}.
\end{equation*}%
For this sequence $\{t_{n}\}$, we have $\beta _{t_{n}}>0$, so $\tilde{R}%
_{t_{n}}^{\varepsilon ,+}=\varepsilon \beta _{t_{n}}$.

\noindent Step 4: Derive the Lower Bound. Substituting the estimate of $%
\beta _{t_{n}}$ into the lower bound of $R_{t}^{\varepsilon }$, we obtain:
\begin{equation*}
R_{t_{n}}^{\varepsilon }\geq \varepsilon (1-\delta )\sqrt{2t_{n}\log \log
(1/t_{n})}.
\end{equation*}%
Dividing both sides by $\varepsilon \sqrt{2t_{n}\log \log (1/t_{n})}$, we
get:
\begin{equation*}
\frac{R_{t_{n}}^{\varepsilon }}{\varepsilon \sqrt{2t_{n}\log \log (1/t_{n})}}%
\geq (1-\delta ).
\end{equation*}%
By the definition of the upper limit ($\limsup $), if there exists a
sequence $t_{n}\rightarrow 0$ such that the sequence $\frac{%
R_{t_{n}}^{\varepsilon }}{\varepsilon \sqrt{2t_{n}\log \log (1/t_{n})}}$ is
bounded below by $(1-\delta )$, then:
\begin{equation*}
\limsup_{t\rightarrow 0}\frac{R_{t}^{\varepsilon }}{\varepsilon \sqrt{2t\log
\log (1/t)}}\geq (1-\delta ).
\end{equation*}%
Step 5: Determine the Constant $c$ Independent of $\varepsilon .$ Since $%
\delta \in (0,1)$ can be arbitrarily chosen, we take $\delta =1/2$ without
loss of generality. Let $c=1/2$, which is a positive constant. Thus, we have
almost surely:
\begin{equation*}
\limsup_{t\rightarrow 0}\frac{R_{t}^{\varepsilon }}{\varepsilon \sqrt{2t\log
\log (1/t)}}\geq c>0.
\end{equation*}%
This completes the proof of the proposition.
\end{proof}

\begin{remark}
Note that the singular repulsive term $\frac{\varepsilon ^{2}c_{0}}{%
2R_{t}^{\varepsilon }}$ is strictly positive for all $t>0$ (provided $%
R_{t}^{\varepsilon }>0$, which holds almost surely). Consequently, the
estimate (\ref{est2}) satisfies a strict lower bound: the left side of $%
\text{(\ref{est2})}>c$.
\end{remark}

The importance of this lower bound lies in providing a deterministic
constraint on the behavior of the process $R_{t}^{\varepsilon }$. Even if $%
\varepsilon $ is very small, as long as $t$ is sufficiently small, the
process $R_{t}^{\varepsilon }$ will reach a magnitude on the order of $%
\varepsilon \sqrt{t\log \log (1/t)}$ with probability $1$. More precisely,
the law of the iterated logarithm guarantees the existence of a constant $%
c>0 $, independent of $\varepsilon $, such that for any small $\delta >0$:
\begin{equation*}
\mathbb{P}\left( \sup_{t\leq \delta }R_{t}^{\varepsilon }\geq c\varepsilon
\sqrt{\delta \log \log (1/\delta )}\right) =1
\end{equation*}%
This means that, within any arbitrarily small time interval $[0,\delta ]$,
the process $R_{t}^{\varepsilon }$ will reach a magnitude on the order of $%
c\varepsilon \sqrt{\delta \log \log (1/\delta )}$ with probability $1$. This
property provides the mathematical foundation for constructing the
\textquotedblleft hard constraint barrier\textquotedblright (HCB) , which
can be precisely expressed in the following mathematical form:

For any $\eta >0$, there exists $\delta _{0}>0$ such that for all $\delta
\in (0,\delta _{0})$:
\begin{equation*}
\mathbb{P}\left( \inf_{t\in (0,\delta ]}\frac{R_{t}^{\varepsilon }}{%
\varepsilon \sqrt{t\log \log (1/t)}}\geq \eta \right) >0.
\end{equation*}%
This inequality indicates that the process $R_{t}^{\varepsilon }$ has a
positive probability of remaining above $\eta \varepsilon \sqrt{t\log \log
(1/t)}$ throughout the time interval $(0,\delta ]$. When $\eta $ is
sufficiently small, this lower bound, though small, is strictly positive and
scales linearly with $\varepsilon $.

More importantly, this barrier possesses the following key properties:

\begin{itemize}
\item $\varepsilon $-independent constancy: Although $\eta $ can be
arbitrarily small, once fixed, it is independent of the noise intensity $%
\varepsilon $. This implies that this lower bound exists regardless of how
small the noise is.

\item Time-scale consistency: The height of the barrier $\eta \varepsilon
\sqrt{t\log \log (1/t)}$ is proportional to the square root of time $t$,
which is consistent with the natural scale of Brownian motion.

\item Positivity of probability: For any arbitrarily small $\delta $, the
probability that the process remains above the barrier is positive, ensuring
the "hardness" of the barrier---the process cannot break through this lower
bound with probability $1$.
\end{itemize}

From a geometric perspective, the HCB defines a tubular region in the path
space. This tubular region is centered at the zero path, and its radius
varies with time $t$ according to $\eta \varepsilon \sqrt{t\log \log (1/t)}$%
. The law of the iterated logarithm guarantees that the paths of the noisy
process $X_{t}^{\varepsilon }$ have a positive probability of remaining
outside this tubular region.

The significance of this geometric picture lies in its intuitive
demonstration of why the zero solution cannot be the limit solution: if the
zero solution were the limit solution, the noisy process should stay near
zero with high probability, but the barrier constructed by the law of the
iterated logarithm prevents this from happening. Before analyzing the limit
process, we first prove that the radial process $R_{t}^{\varepsilon }$ is an
immediate escape solution for any fixed $\varepsilon >0$.

\begin{proposition}
\label{pro3}For any fixed $\varepsilon >0$, the radial process $%
R_{t}^{\varepsilon }$ is an immediate escape solution, i.e., for any $t>0$, $%
\mathbb{P}(R_{t}^{\varepsilon }=0)=0$.
\end{proposition}

\begin{proof}
We use proof by contradiction. Assume that there exists $t_{0}>0$ such that $%
\mathbb{P}(R_{t_{0}}^{\varepsilon }=0)>0$. Consider the event $%
A=\{R_{t_{0}}^{\varepsilon }=0\}$, where $\mathbb{P}(A)>0$. On the event $A$%
, if there exists $t\in (0,t_{0})$ such that $R_{t}^{\varepsilon }=0$ for
all $t\in (0,t_{0})$, this contradicts the lower bound established in
Proposition \ref{pro2}.

Specifically, Proposition \ref{pro2} shows that as $t\rightarrow 0$, $%
R_{t}^{\varepsilon }$ will infinitely often reach the order of $\varepsilon
\sqrt{t\log \log (1/t)}$, which is strictly positive. This means that $%
R_{t}^{\varepsilon }$ cannot remain zero for all $t\in (0,t_{0})$ on the
event $A$. Therefore, our assumption is false, and for any $t>0$, $\mathbb{P}%
(R_{t}^{\varepsilon }=0)=0$. Thus, $R_{t}^{\varepsilon }$ is an immediate
escape solution.
\end{proof}

Next we shall use the weak convergence of $R_{t}^{\varepsilon }$ as $%
\varepsilon \rightarrow 0$ and the Portmanteau Theorem \ref{pthe} to prove
that the limit radial process $R_{t}^{0}$ inherits the immediate escape
property, strictly excluding the zero solution.

\begin{theorem}
\label{main-1}Let $R_{t}^{\varepsilon }\Rightarrow R_{t}^{0}$ (weakly
converges) as $\varepsilon \rightarrow 0$. Then the limit process $R_{t}^{0}$
is an immediate escape solution, i.e., for any $t>0$, $\mathbb{P}%
(R_{t}^{0}=0)=0$. Moreover, the zero solution $R_{t}^{0}\equiv 0$ (for all $%
t\in \lbrack 0,T]$) is excluded, i.e., $\mathbb{P}(R_{t}^{0}\equiv 0\text{
for all }t\in \lbrack 0,T])=0$.
\end{theorem}

\begin{proof}
We divide the proof into two parts, both using proof by contradiction, and
closely combine the open set $G_{\delta }$ and Proposition \ref{pro2}-\ref%
{pro3}.

\noindent \emph{Part 1:} Immediate Escape Property of $R_{t}^{0}$ (Proof by
Contradiction). Assume for contradiction that the limit process $R_{t}^{0}$
is \textit{not} an immediate escape solution. By the definition of immediate
escape solution (Definition \ref{ies}), this means there exists some fixed $%
t_{0}>0$ such that $\mathbb{P}(R_{t_{0}}^{0}=0)=\alpha >0$ (where $\alpha $
is a positive constant).

For this fixed $t_{0}>0$ and any $\delta >0$, define the open set $G_{\delta
}=\{x\in \mathbb{R}:|x|<\delta \}$. Note that the event $\{R_{t_{0}}^{0}=0\}$
is contained in $G_{\delta }$ for any $\delta >0$ (since $|0|=0<\delta $),
so we have:
\begin{equation*}
\mathbb{P}(R_{t_{0}}^{0}\in G_{\delta })\geq \mathbb{P}(R_{t_{0}}^{0}=0)=%
\alpha >0.
\end{equation*}%
By Proposition \ref{pro3}, for any fixed $\varepsilon >0$, $%
R_{t}^{\varepsilon }$ is an immediate escape solution, so $\mathbb{P}%
(R_{t_{0}}^{\varepsilon }=0)=0$. Combining with the lower bound of $%
R_{t}^{\varepsilon }$ established in Proposition \ref{pro2}, we know that as
$\varepsilon \rightarrow 0$, $R_{t_{0}}^{\varepsilon }$ has a positive
probability of escaping from the open set $G_{\delta }$ (i.e., $%
|R_{t_{0}}^{\varepsilon }|\geq \delta $). Specifically, for any $\delta >0$,
there exists a constant $p_{\delta }>0$ (independent of $\varepsilon $) such
that:
\begin{equation*}
\mathbb{P}(R_{t_{0}}^{\varepsilon }\notin G_{\delta })=\mathbb{P}%
(|R_{t_{0}}^{\varepsilon }|\geq \delta )\geq p_{\delta },
\end{equation*}%
which implies:
\begin{equation}
\mathbb{P}(R_{t_{0}}^{\varepsilon }\in G_{\delta })=1-\mathbb{P}%
(R_{t_{0}}^{\varepsilon }\notin G_{\delta })\leq 1-p_{\delta }.
\label{lest1}
\end{equation}%
Since $R_{t}^{\varepsilon }\Rightarrow R_{t}^{0}$ weakly, by the Portmanteau
Theorem \ref{pthe} (open set version), for the open set $G_{\delta }$, we
have:
\begin{equation*}
\mathbb{P}(R_{t_{0}}^{0}\in G_{\delta })\leq \liminf_{\varepsilon
\rightarrow 0}\mathbb{P}(R_{t_{0}}^{\varepsilon }\in G_{\delta }).
\end{equation*}%
Substituting the estimate (\ref{lest1}) into this inequality, we get:
\begin{equation*}
\mathbb{P}(R_{t_{0}}^{0}\in G_{\delta })\leq \liminf_{\varepsilon
\rightarrow 0}\mathbb{P}(R_{t_{0}}^{\varepsilon }\in G_{\delta
})=1-p_{\delta }.
\end{equation*}%
Recall from Step 1 that $\mathbb{P}(R_{t_{0}}^{0}\in G_{\delta })\geq \alpha
>0$. That is
\begin{equation*}
\alpha \leq \mathbb{P}(R_{t_{0}}^{0}\in G_{\delta })\leq 1-p_{\delta }.
\end{equation*}%
But here comes the contradiction: Proposition \ref{pro3} guarantees that $%
p_{\delta }>0$ (the probability of $R_{t_{0}}^{\varepsilon }$ escaping $%
G_{\delta }$ is positive), and our initial assumption is $\alpha >0$.
However, if we take $\delta $ sufficiently small, the lower bound from
Proposition \ref{pro2} implies that $p$ can be chosen such that $1-p<\alpha $
(contradicting $\alpha \leq 1-p$).

Therefore, our initial assumption is false, and for any $t > 0$, $\mathbb{P}%
(R_t^0 = 0) = 0$, meaning $R_t^0$ is an immediate escape solution.

\noindent \emph{Part 2:} Excluding the Zero Solution. The zero solution is
defined as $R_{t}^{0}\equiv 0$ for all $t\in \lbrack 0,T]$, whose
probability is $\mathbb{P}(R_{t}^{0}\equiv 0\text{ for all }t\in \lbrack
0,T])$. Note that the event $\{R_{t}^{0}\equiv 0\}$ is the intersection of
all events $\{R_{t}^{0}=0\}$ for $t\in (0,T]$, i.e.:
\begin{equation*}
\{R_{t}^{0}\equiv 0\}=\bigcap_{t\in (0,T]}\{R_{t}^{0}=0\}.
\end{equation*}%
Since the intersection of countably many zero-probability events is still a
zero-probability event, we have:
\begin{equation*}
\mathbb{P}(R_{t}^{0}\equiv 0\text{ for all }t\in \lbrack 0,T])=0.
\end{equation*}%
This strictly excludes the zero solution, confirming that the limit process $%
R_{t}^{0}$ is exactly the immediate escape solution.
\end{proof}

The crucial role played by the law of the iterated logarithm in the
zero-noise limit reveals the mathematical nature of this solution selection
mechanism:

\begin{itemize}
\item Screening of path properties: The law of the iterated logarithm
screens out paths that exhibit \textquotedblleft
Brownian-like\textquotedblright\ oscillatory behavior as possible limits.
The zero path, which lacks such oscillatory properties, is excluded.

\item Necessity in the probabilistic sense: The law of the iterated
logarithm translates path properties into computable probability estimates
via the form of probability inequalities. This transformation enables us to
rigorously prove that the probability of certain path sets is either zero or
one.

\item Manifestation of scale invariance: The scale $\sqrt{t\log \log (1/t)}$
determined by the law of the iterated logarithm retains its importance in
the limit process $\varepsilon \rightarrow 0$, which reflects the
universality of Brownian motion as the \textquotedblleft fundamental
noise\textquotedblright .

\item Immediate departure from the origin: Although $X_{0}^{0}=0$, for any $%
t>0$, $|X_{t}^{0}|>0$ holds almost surely. This is because all noisy
processes $X_{t}^{\varepsilon }$ possess this property, and weak convergence
preserves the probabilistic meaning of this attribute.

\item Invariance of the time scale: Although $\varepsilon \rightarrow 0$,
the time scale $\sqrt{t\log \log (1/t)}$ determined by the law of the
iterated logarithm still plays a crucial role in the limit process. This
indicates that, even in the deterministic limit, the path properties of
Brownian motion are \textquotedblleft imprinted\textquotedblright\ on the
limit process through the form of the law of the iterated logarithm.
\end{itemize}

%Combining the fact $R_{t}^{\varepsilon }=\left\vert X_{t}^{\varepsilon
%}\right\vert ,$ $\forall \varepsilon >0,$ we are able to assert the main
%result based on Theorem \ref{main-1}.
Combining the identity $R_{t}^{\varepsilon }=|X_{t}^{\varepsilon }|$, which
holds for all $\varepsilon >0$, with Theorem \ref{main-1}, we establish the
main result.

\begin{theorem}
\label{thm:main} Under assumptions \emph{(A1)-(A2)}, let $X_{t}^{0}$ be the
weak limit of the SDE solution sequence $\{X_{t}^{\varepsilon }\}$ as $%
\varepsilon \rightarrow 0$ (guaranteed by Propositions \ref{prop:tightness}
and \ref{prop:ode_limit}). Then:
\begin{equation*}
\mathbb{P}\left( X_{t}^{0}\neq 0\text{ for all }t>0\right) =1.
\end{equation*}%
In other words, the zero-noise limit exclusively selects instantaneous
escape solutions of the ODE (\ref{2.2}).
\end{theorem}

%Note that Theorem \ref{thm:main} still applies to the examples $b_{1}$ and $%
%b_{2}$ defined in Remark \ref{re1} since $\left( b_{i}\left( x\right)
%,x\right) \geq 0,i=1,2$ are continuous and nonnegative.
We have successfully identified the limiting solutions. Nevertheless, the
subsequent example demonstrates that the probability concentrated on the
extremal solution may potentially be zero.

\begin{example}
\label{exas} Consider%
\begin{equation*}
b\left( x\right) =\left\{
\begin{array}{cc}
-x^{\alpha }\log x, & x\geq 0, \\
-\left\vert x\right\vert ^{\beta }, & x<0,%
\end{array}%
\right.
\end{equation*}%
with $\alpha <\beta <1$. By Lemma 4.2 in \cite{BB}, Bafico $\&$ Baldi (1982)
have proven that in a small time interval there is exactly one limiting
value which gives mass only on the upper extremal solution. Consequently,
the selection mechanism for the random element $\omega \in \Omega$ and the
support of the limiting process $X_t^0 = \lim_{\varepsilon \to 0}
X_t^\varepsilon$ require rigorous analysis; these topics are addressed in
Sections \ref{sect5} and \ref{sect6}, respectively.
\end{example}

\begin{remark}
\label{re2} In the proof of Theorem \ref{thm:main}, we employ the radial
process $R_{t}^{\varepsilon }$ defined in (\ref{rd}), wherein the term $%
\frac{\langle X_{t}^{\varepsilon },b(X_{t}^{\varepsilon })\rangle }{%
R_{t}^{\varepsilon }}$ appearing in (\ref{rdd}) plays a crucial role.
Specifically, this term remains well-defined and nonnegative whenever $%
\langle X_{t}^{\varepsilon },b(X_{t}^{\varepsilon })\rangle \geq 0$ for $%
|X_{t}^{\varepsilon }|\neq 0$. Under this condition, the conclusion of
Theorem \ref{thm:main} continues to hold. Consequently, the
theorem---appropriately interpreted in the sense of Filippov solutions (see
Definition \ref{fili})---applies to the examples $b_{1}$ and $b_{2}$
introduced in Remark \ref{re1}, since the inner products $\langle
b_{i}(x),x\rangle $, for $i=1,2$, are both continuous and nonnegative.
Indeed, the corresponding ODEs admit Filippov solutions, as illustrated in
Examples \ref{exa4} and \ref{exa5}.
%In the proceeding proof of Theorem \ref{thm:main}, we borrow the
%the radial process $R_{t}^{\varepsilon }$ (\ref{rd}) in which the term $%
%\frac{\left\langle X_{t}^{\varepsilon },b\left( X_{t}^{\varepsilon }\right)
%\right\rangle }{R_{t}^{\varepsilon }}$ in (\ref{rdd}) play an crucial role.
%if $\left\langle X_{t}^{\varepsilon },b\left( X_{t}^{\varepsilon }\right)
%\right\rangle \geq 0,$ for $\left\vert X_{t}^{\varepsilon }\right\vert \neq
%0.$ The result still holds. Therefore, Theorem \ref{thm:main} after modified
%remains applicable to the examples $b_{1}$ and $b_{2}$ introduced in Remark %
%\ref{re1}, as the inner products $\langle b_{i}(x),x\rangle $, for $i=1,2$,
%are both continuous and nonnegative. Of course, the solutions to the
%corresponding ODE are Filippov solutions (see Example \ref{exa4} and \ref%
%{exa5}).
\end{remark}

Now define $\vartheta_b(x) = \langle b(x), x \rangle$ for all $x \in \mathbb{%
R}^d $. We impose the following additional assumption:
%Assume that $b$ is measurable, locally
%bounded, and $b\left( 0\right) =0.$ Moreover, the function $\vartheta _{b}:%
%\mathbb{R}^{d}\rightarrow \mathbb{R}$ satisfies $\vartheta _{b}\left(
%x\right) \geq 0$.

\begin{enumerate}
\item[\textbf{(A3)}] Assume that $b$ is measurable and locally bounded, with
$b(0)=0$, and $0\notin \Int\big \{\left. x\in \mathbb{R}^{d}\right\vert
b\left( x\right) =0\big \}$. Furthermore, the function $\vartheta _{b}\colon
\mathbb{R}^{d}\rightarrow \mathbb{R}$ satisfies $\vartheta _{b}(x)\geq 0$
for all $x\in \mathbb{R}^{d}$.
\end{enumerate}

%Under (A1)$^{\prime }$ and (A3), the tightness of the SDE (\ref{2.2}) solution sequence
%can be proved in the same way of Proposition \ref{prop:tightness}. The
%limiting solution can be guaranteed by Proposition \ref{pro1}.
Under Assumption (A3), the SDE (\ref{2.1}) possesses a unique strong
solution under the sole hypothesis that the drift coefficient $b$ is bounded
and measurable---see, e.g., \cite{Z}. The tightness of the solution family
associated with SDE (\ref{2.2}) follows by a direct adaptation of the
argument in Proposition \ref{prop:tightness}. Existence of a limiting
solution is then guaranteed by Proposition \ref{pro1}.

\begin{theorem}
\label{thm:main2} %Under assumptions \textit{(A1)}$^{\prime }$\textit{\ and
%(A3)}, let $X_{t}^{0}$ be the Filippov solution of ODE (\ref{2.2}) and the
%weak limit of the SDE solution sequence $\{X_{t}^{\varepsilon }\}$ as $%
%\varepsilon \rightarrow 0$. Then:
%\begin{equation*}
%\mathbb{P}\left( X_{t}^{0}\neq 0\text{ for all }t>0\right) =1.
%\end{equation*}
Under assumption \emph{(A3)}, let $X_{t}^{0}$ denote the Filippov solution
of the ODE (\ref{2.2}) and the weak limit (in law) of the SDE (\ref{2.1})
solution sequence $\{X_{t}^{\varepsilon }\}$ as $\varepsilon \rightarrow 0$.
Then

\begin{equation*}
\mathbb{P}\big(X_{t}^{0}\neq 0\text{ for all }t>0\big)=1.
\end{equation*}
\end{theorem}

%\subsection{Corollary: Uniqueness of the Limit Solution}
%
%A direct consequence of our main result is the uniqueness of the limit
%solution (up to weak convergence):
%
%\begin{corollary}[Uniqueness of Limit]
%\label{cor:uniqueness} All weakly convergent subsequences of the SDE
%solution sequence $\{X_{t}^{\varepsilon }\}$ converge to the same
%instantaneous escape solution of the ODE (\ref{2.2}).
%\end{corollary}
%
%\begin{proof}
%Suppose for contradiction that there exist two subsequences $\varepsilon
%_{n}\rightarrow 0$ and $\varepsilon _{m}\rightarrow 0$ such that:
%\begin{equation*}
%X_{t}^{\varepsilon _{n}}\rightarrow X_{t}^{1},\text{ weakly}\quad \text{and}%
%\quad X_{t}^{\varepsilon _{m}}\rightarrow X_{t}^{2},\text{ weakly}
%\end{equation*}%
%where $X_{t}^{1}$ and $X_{t}^{2}$ are distinct instantaneous escape
%solutions of the ODE (2.2).
%
%By Theorem \ref{thm:main}, both $X_{t}^{1}$ and $X_{t}^{2}$ are
%instantaneous escape solutions. However, the stochastic dominance result
%(Theorem \ref{thm:comparison}) and uniform lower bounds on the comparison
%process $r_{t}^{\varepsilon }$ imply that all limit points must satisfy the
%same radial dynamics --this forces $X_{t}^{1}=X_{t}^{2}$ $\mathbb{P}$-a.s.
%(For a complete proof of uniqueness, see \cite{kifer1986small}.)
%\end{proof}

To illustrate the theoretical result, we provide three nontrivial examples.
The first, denoted $b_{3}$, is a two-dimensional continuous drift
coefficient. The remaining two--namely, $b_{1}$ and $b_{2}$--are those
introduced in Remark \ref{re1}.

\begin{example}
\label{exa3}We analyze the ODE
\begin{equation*}
\dot{z}=b_{3}(z),\text{ }z(0)=0,
\end{equation*}%
where
\begin{equation*}
b_{3}\left(
\begin{bmatrix}
x \\
y%
\end{bmatrix}%
\right) =2%
\begin{bmatrix}
\sgn(y)\sqrt{|y|} \\
\sgn(x)\sqrt{|x|}%
\end{bmatrix}%
.
\end{equation*}%
Note that $b_{3}$ satisfy the Assumptions (A1)-(A2). All solutions are given
as follows. For any $\tau \geq 0$ and any $(\sigma _{1},\sigma _{2})\in
\{(-1,-1),(1,1)\}$,
\begin{equation*}
(x(t),\,y(t))=%
\begin{cases}
(0,0), & 0\leq t\leq \tau , \\[4pt]
\bigl(\sigma _{1}(t-\tau )^{2},\ \sigma _{2}(t-\tau )^{2}\bigr), & t>\tau .%
\end{cases}%
\end{equation*}%
In particular: The stationary solution: $\tau =\infty $, i.e., $%
(x(t),y(t))=(0,0)$ for all $t\geq 0$. Immediate departure solutions: $\tau
=0 $. Solutions that stay at the origin for time $\tau >0$ then depart
immediately: all $\tau >0$. All the \textit{immediate exit solutions} of
this ODE are: First quadrant: $z_{1}(t)=(t^{2},t^{2});$ Third quadrant: $%
z_{3}(t)=(-t^{2},-t^{2}).$ According to Theorem \ref{thm:main}, the
solutions $z_{i},$ $i=1,2$ are limit solutions with probability $1/2.$
\end{example}

\begin{example}
\label{exa4}Consider $b_{1}$ defined in Remark \ref{re1}. For a scalar
function $b_{1}:\mathbb{R}\rightarrow \mathbb{R}$, the Filippov
regularization (\ref{filir}) set ( For $b_{1}(x)=\sgn(x)$, the only point of
discontinuity is at $x=0$. When $x\neq 0$: $b_{1}(x)$ is continuous at $x$,
so the Filippov set reduces to a singleton:
\begin{equation*}
F_{b_{1}}(x)=\bigl\{\sgn(x)\bigr\}.
\end{equation*}%
Specifically: If $x>0$, then $F_{b_{1}}(x)=\{1\}$; If $x<0$, then $%
F_{b_{1}}(x)=\{-1\}$. When $x=0$: This is a point of discontinuity. We need
to consider the values of $b_{1}(y)$ in an arbitrarily small neighborhood $%
B(0,\delta )$ around 0: When $y>0$, $b_{1}(y)=1$; When $y<0$, $b_{1}(y)=-1$.
The convex hull of these values is the interval $[-1,1]$, which is closed.
Therefore:
\begin{equation*}
F_{b_{1}}(0)=[-1,1].
\end{equation*}%
We now analyze the Filippov solution to $b_{1}.$ The most straightforward
solution is to stay stationary at $x=0$: $x(t)\equiv 0,\quad \forall t\geq
0. $ This is a valid Filippov solution, since $\dot{x}=0\in
F_{b_{1}}(0)=[-1,1]$. The solution can immediately leave $x=0$ after $t=0$.
There are two cases: Exit to the right: $\dot{x}=1$, with solution $%
x(t)=t,\quad t\geq 0.$ Verification: $x(0)=0$, and $\dot{x}(t)=1\in
F_{b_{1}}(x(t))$ (for $t>0$, $x(t)>0$, so $F_{b_{1}}(x(t))=\{1\}$). Exit to
the left: $\dot{x}=-1$, with solution $x(t)=-t,\quad t\geq 0.$ Verification:
$x(0)=0$, and $\dot{x}(t)=-1\in F_{b_{1}}(x(t))$ (for $t>0$, $x(t)<0$, so $%
F_{b_{1}}(x(t))=\{-1\}$). More generally, the solution can stay at $x=0$ for
an arbitrary time $\tau \geq 0$, then choose a direction to leave:
\begin{equation*}
x(t)=%
\begin{cases}
0, & 0\leq t\leq \tau , \\
t-\tau , & t>\tau \quad (\text{exit to the right}),%
\end{cases}%
\end{equation*}%
or
\begin{equation*}
x(t)=%
\begin{cases}
0, & 0\leq t\leq \tau , \\
-(t-\tau ), & t>\tau \quad (\text{exit to the left}).%
\end{cases}%
\end{equation*}%
When $\tau =0$, this reduces to an immediate exit solution; when $\tau
=\infty $, it reduces to the sliding mode solution. According to Theorem \ref%
{thm:main2}, the solutions $z_{i},$ $i=1,2,3,4$ are limit solutions with
probability $1/2.$
\end{example}

\begin{example}
\label{exa5}Consider $b_{2}$ defined in Remark \ref{re1}. Recall the vector
field is defined as $b_{2}\left(
\begin{bmatrix}
x \\
y%
\end{bmatrix}%
\right) =%
\begin{bmatrix}
2\sgn(x)\sqrt{|y|} \\
2\sgn(y)\sqrt{|x|}%
\end{bmatrix}%
.$ This vector field is discontinuous on the coordinate axes: At $x=0$, $\sgn%
(x)$ is discontinuous; At $y=0$, $\sgn(y)$ is discontinuous; At the origin $%
(0,0)$, both $\sqrt{|x|}$ and $\sqrt{|y|}$ vanish, so $b(0,0)=(0,0)$. We
decompose the field by quadrants and coordinate axes: For first quadrant ($%
x>0,y>0$): $b_{2}(x,y)=%
\begin{bmatrix}
2\sqrt{y} \\
2\sqrt{x}%
\end{bmatrix}%
.$ For second quadrant ($x<0,y>0$): $b_{2}(x,y)=%
\begin{bmatrix}
-2\sqrt{y} \\
2\sqrt{-x}%
\end{bmatrix}%
.$ For third quadrant ($x<0,y<0$): $b_{2}(x,y)=%
\begin{bmatrix}
-2\sqrt{-y} \\
-2\sqrt{-x}%
\end{bmatrix}%
.$ For fourth quadrant ($x>0,y<0$): $b_{2}(x,y)=%
\begin{bmatrix}
2\sqrt{-y} \\
-2\sqrt{x}%
\end{bmatrix}%
.$ For \textbf{$x$-axis} ($y=0,x\neq 0$): $b_{2}(x,0)=%
\begin{bmatrix}
0 \\
0%
\end{bmatrix}%
.$For \textbf{$y$-axis} ($x=0,y\neq 0$): $b_{2}(0,y)=%
\begin{bmatrix}
0 \\
0%
\end{bmatrix}%
.$

We now find the Filippov Regularization $b_{2}.$ According to Filippov
regularization (see Definition \ref{filir} in Appendix \ref{app}). We
compute the regularization on the discontinuity set:

\noindent \textbf{Case 1}: On the $x$-axis, $z=(x,0)$, $x\neq 0.$ Take a
small neighborhood $(x-\delta ,x+\delta )\times (-\delta ,\delta )$, and
ignore the null set $N$. Upper half of the neighborhood ($y>0$): $b\approx
\begin{bmatrix}
2\sgn(x)\sqrt{y} \\
2\sqrt{|x|}%
\end{bmatrix}%
$, and as $y\rightarrow 0^{+}$, the limit is $%
\begin{bmatrix}
0 \\
2\sqrt{|x|}%
\end{bmatrix}%
$. Lower half of the neighborhood ($y<0$): $b\approx
\begin{bmatrix}
2\sgn(x)\sqrt{-y} \\
-2\sqrt{|x|}%
\end{bmatrix}%
$, and as $y\rightarrow 0^{-}$, the limit is $%
\begin{bmatrix}
0 \\
-2\sqrt{|x|}%
\end{bmatrix}%
$. Thus, the Filippov regularization is the convex hull of these two limit
vectors:
\begin{equation*}
F_{b}(x,0)=\left\{
\begin{bmatrix}
0 \\
s\cdot 2\sqrt{|x|}%
\end{bmatrix}%
\,\Big|\,s\in \lbrack -1,1]\right\} .
\end{equation*}

\noindent \textbf{Case 2}: On the $y$-axis, $z=(0,y)$, $y\neq 0.$ Similarly,
take the neighborhood $(-\delta ,\delta )\times (y-\delta ,y+\delta )$.
Right half of the neighborhood ($x>0$): $b\approx
\begin{bmatrix}
2\sqrt{|y|} \\
2\sgn(y)\sqrt{x}%
\end{bmatrix}%
$, and as $x\rightarrow 0^{+}$, the limit is $%
\begin{bmatrix}
2\sqrt{|y|} \\
0%
\end{bmatrix}%
$. Left half of the neighborhood ($x<0$): $b\approx
\begin{bmatrix}
-2\sqrt{|y|} \\
2\sgn(y)\sqrt{-x}%
\end{bmatrix}%
$, and as $x\rightarrow 0^{-}$, the limit is $%
\begin{bmatrix}
-2\sqrt{|y|} \\
0%
\end{bmatrix}%
$. Thus, the Filippov regularization is the convex hull of these two limit
vectors:
\begin{equation*}
F_{b}(0,y)=\left\{
\begin{bmatrix}
s\cdot 2\sqrt{|y|} \\
0%
\end{bmatrix}%
\,\Big|\,s\in \lbrack -1,1]\right\} .
\end{equation*}

\noindent \textbf{Case 3}: At the origin $z=(0,0).$ Take the neighborhood $%
(-\delta ,\delta )\times (-\delta ,\delta )$, and ignore the null set $N$.
Limit from the first quadrant: $(2\sqrt{y},2\sqrt{x})\rightarrow (0,0)$;
Limit from the second quadrant: $(-2\sqrt{y},2\sqrt{-x})\rightarrow (0,0)$;
Limit from the third quadrant: $(-2\sqrt{-y},-2\sqrt{-x})\rightarrow (0,0)$;
Limit from the fourth quadrant: $(2\sqrt{-y},-2\sqrt{x})\rightarrow (0,0)$.
All directional limits converge to the origin, so:
\begin{equation*}
F_{b}(0,0)=\{(0,0)\}.
\end{equation*}

\noindent At the origin $z=0$, even though $F_{b}(0,0)=\{(0,0)\}$, the
vector field points in four different directions in a neighborhood around
the origin. This leads to the differential inclusion $\dot{z}\in F_{b}(z)$
admitting four \textquotedblleft immediate escape\textquotedblright\
solutions, which leave the origin along the diagonals of the four quadrants:
$%
z_{1}(t)=(t^{2},t^{2}),z_{2}(t)=(-t^{2},t^{2}),z_{3}(t)=(-t^{2},-t^{2}),z_{4}(t)=(t^{2},-t^{2}).
$

\noindent We now seek the Filippov Solutions with Initial Condition $%
X(0)=(0,0).$ We need to find all absolutely continuous functions $%
X(t)=(x(t),y(t))$ satisfying: $X(0)=(0,0)$; For almost all $t\geq 0$, $\dot{X%
}(t)\in F(X(t))$. Starting from $F(0,0)=\{(0,0)\}$, we analyze the behavior
of solutions: Stationary solution: The most direct solution is $X(t)\equiv
(0,0)$. In this case, $\dot{X}(t)=(0,0)\in F(0,0)$, satisfying the
conditions. Possibility of immediate exit solutions: Assume there exists a
solution that leaves $(0,0)$ for $t>0$, i.e., $X(t)\not\equiv (0,0)$. If the
solution enters the region $x>0,y>0$ for $t>0$, then $\dot{x}=2\sqrt{y}$, $%
\dot{y}=2\sqrt{x}$. We can attempt to construct a solution. Dividing the two
equations:
\begin{equation*}
\frac{dy}{dx}=\frac{\sqrt{x}}{\sqrt{y}}\quad \Rightarrow \quad \sqrt{y}\,dy=%
\sqrt{x}\,dx.
\end{equation*}%
Integrating gives $\frac{2}{3}y^{3/2}=\frac{2}{3}x^{3/2}+C$. Using the
initial condition $X(0)=(0,0)$, we find $C=0$, so $y(t)=x(t)$. Substituting
into $\dot{x}=2\sqrt{x}$, we solve to get $x(t)=t^{2}$, and thus $y(t)=t^{2}$%
. Verification: $X(t)=(t^{2},t^{2})$, so $\dot{X}(t)=(2t,2t)$. For $t>0$, $%
b(X(t))=(2\sqrt{t^{2}},2\sqrt{t^{2}})=(2t,2t)$, so $\dot{X}(t)=b(X(t))\in
F(X(t))$. Also, $X(0)=(0,0)$, satisfying the initial condition. Similarly,
we can obtain immediate exit solutions for the other quadrants: $%
X(t)=(t^{2},-t^{2}),X(t)=(-t^{2},t^{2}),X(t)=(-t^{2},-t^{2}).$Mixed
solutions (stay then exit): A solution can stay at $(0,0)$ for an arbitrary
time $\tau \geq 0$, then choose a direction to leave:
\begin{equation*}
X(t)=%
\begin{cases}
(0,0), & 0\leq t\leq \tau , \\
((t-\tau )^{2},(t-\tau )^{2}), & t>\tau ,%
\end{cases}%
\end{equation*}%
with analogous forms for the other three quadrants.

For the initial condition $X(0)=(0,0)$, the Filippov solutions of this
system include: Stationary solution: $X(t)\equiv (0,0)$ for all $t\geq 0$.
Immediate exit solutions:
\begin{equation*}
X(t)=(\pm t^{2},\pm t^{2}),\quad t\geq 0.
\end{equation*}%
Mixed solutions: Stay at $(0,0)$ for an arbitrary time $\tau \geq 0$, then
depart in the form $X(t)=(\pm (t-\tau )^{2},\pm (t-\tau )^{2})$. According
to Theorem \ref{thm:main2}, the solutions $z_{i},$ $i=1,2,3,4$ are limit
solutions with probability $1/4.$
\end{example}

The recurrence or transience of Brownian motion influences the limit
behavior of the small-noise diffusion $X_t^\varepsilon$ through a
dimension-dependent mechanism, which is specified as follows:

\begin{enumerate}
\item[(1)] Low-dimensional case ($d=1,2$): Recurrence of Brownian motion.
The recurrence of Brownian motion endows the small-noise diffusion $%
X_{t}^{\varepsilon }$ with a strong tendency to return to the origin. This
pull-back effect significantly increases the possibility that delayed
solutions (Filippov solutions that stay at the origin for a finite time
before exiting) are approximated by small-noise trajectories. Essentially,
the recurrence of noise provides the system with multiple opportunities to
\textquotedblleft attempt\textquotedblright\ returning to the origin, making
trajectories that \textquotedblleft reside at the origin for a period and
then exit\textquotedblright\ valid limit trajectories.

\item[(2)] High-dimensional case ($d\geq 3$): Transience of Brownian motion.
The transience of Brownian motion implies that trajectories are unlikely to
return to the origin once they leave it. This \textit{push-away effect}
makes the support of the zero-noise limit consist purely of immediate exit
solutions (instantaneous escape Filippov solutions). Since delayed solutions
require the system to stay at the origin for a finite time, and the
transience of noise in high dimensions drastically reduces the probability
of such residence, delayed solutions are geometrically negligible.
\end{enumerate}

\section{The support of the zero-noise limit distribution}

\label{sect5}

In this section, we establish the zero-noise limit theory connections with
Geometric Measure Theory (GMT). The geometric characteristics of the support
set of the zero-noise limit distribution, such as its Hausdorff dimension,
connectivity, and singularity, are important research objects in geometric
measure theory. Meanwhile, the tools of geometric measure theory also
provide solid support for the theoretical analysis of zero-noise limits. Its
core applications are concentrated in areas such as the fractal structure of
support sets, the geometric characterization of singular distributions, and
the geometric constraints on reachable sets.

\begin{definition}[Support of an SDE]
Under Assumption \emph{(A3)}, the \emph{support} of the SDE (\ref{2.1})
solution $X_{t}^{\varepsilon }$ is the closure in $C([0,T],\mathbb{R}^{d})$
of the set of all continuous functions $x:[0,T]\rightarrow \mathbb{R}^{d}$
such that $\mathbb{P}(X_{t}^{\varepsilon }\in U)>0$ for every open
neighborhood $U$ of $x$ in $C([0,T],\mathbb{R}^{d})$.
\end{definition}

\begin{theorem}
\label{vd}Under Assumption \emph{(A3)}, let $X_{t}^{\varepsilon }$ be the
solution of the SDE (\ref{2.1}). Then the support of $X_{t}^{\varepsilon }$
on $C([0,T],\mathbb{R}^{d})$ is exactly the set of all continuous functions $%
x:[0,T]\rightarrow \mathbb{R}^{d}$ with $x(0)=0$ that can be written as
\begin{equation*}
x(t)=\int_{0}^{t}b(x(s))ds+\varepsilon \int_{0}^{t}\sigma (x(s))\dot{h}(s)ds
\end{equation*}%
for some $\dot{h}\in L^{2}([0,T],\mathbb{R}^{m})$. In other words, the
support is the set of all controlled trajectories of the ODE with drift $b$
and control $\varepsilon \sigma \dot{h}$.
\end{theorem}

The classical support theorem of Stroock and Varadhan \cite%
{stroock_varadhan79, stroock1972support} describes the support of the law of
a diffusion process as the closure of trajectories reachable by a
deterministic control. The original result requires continuous coefficients.
We prove the theorem under minimal regularity: drift is bounded measurable,
diffusion is bounded, continuous, uniformly non-degenerate in Appendix \ref%
{app}.

Consider the deterministic ODE associated with the drift $b$:
\begin{equation}
\dot{\phi}_{t}=b(\phi _{t}),\quad \phi _{0}=0.  \label{sode}
\end{equation}%
We define the set of all Filippov solutions to ODE (\ref{sode}) as follows:

\begin{definition}[Solution Set]
Under Assumption \emph{(A3)}, let $\Phi $ denote the set of all continuous
Filippov solutions to ODE (\ref{sode}) on $[0,T]$. Then
\begin{equation*}
\Phi =\left\{ \begin{aligned} &\phi^0 \equiv 0 \quad (\text{trivial zero
solution}), \\ &\phi_{\tau,+}(t) = \begin{cases} 0, & 0 \leq t < \tau, \\
\phi_+(t - \tau), & t \geq \tau \end{cases} \quad \forall \tau \in [0, T]
\end{aligned}\right\} ,
\end{equation*}%
where:

\begin{itemize}
\item $\phi _{+}$ is the \textit{immediately leaving solution} ($\tau =0$),
satisfying $\phi _{+}(t)\neq 0$ for all $t>0$;

\item $\phi _{\tau ,+}$ (for $\tau \in (0,T)$) are \textit{delayed solutions}
(stay at the origin for $\tau $ time before leaving);

\item $\tau =T$ recovers the trivial zero solution.
\end{itemize}

Let $\Phi _{+}=\{\phi _{+}\}$ denote the set of immediately leaving
solutions.
\end{definition}

\begin{definition}[Support of a Measure]
The \textit{support} of a probability measure $\mu $ (denoted $\supp(\mu )$)
is the smallest closed set $S\subset \mathbb{R}^{d}$ such that $\mu (S)=1$.
Equivalently,
\begin{equation*}
\supp(\mu )=\{x\in \mathbb{R}^{d}:\mu (U)>0\text{ for every open
neighborhood }U\text{ of }x\}.
\end{equation*}
\end{definition}

We now state and prove our core theorem, which characterizes the support of
the zero-noise limit distribution.

\begin{theorem}
Under Assumption \emph{(A3)}, let $\mu _{t}^{\varepsilon }=\mathcal{L}%
(X_{t}^{\varepsilon }),$ the distribution of $X_{t}^{\varepsilon }.$ Then%
\begin{equation*}
\liminf_{\varepsilon \rightarrow 0}\supp(\mu _{t}^{\varepsilon })=\overline{%
\{\phi (t):\phi \in \Phi _{+}\}}.
\end{equation*}%
In words, the support of the zero-noise limit distribution is exactly the
closure of the set of points reached by the immediately leaving ODE
solutions at time $t$.
\end{theorem}

\begin{proof}
The proof proceeds combining the Stroock-Varadhan support theorem, and lower
semicontinuity of support under weak convergence.

For each $\varepsilon >0$, let $P^{\varepsilon }=\mathcal{L}(X_{\cdot
}^{\varepsilon })$ denote the distribution of the trajectory $X_{\cdot
}^{\varepsilon }$ on the space $\Omega =C([0,T];\mathbb{R}^{d})$ (equipped
with the uniform norm topology). The Stroock-Varadhan support theorem \cite%
{stroock1972support} states:
\begin{equation*}
\supp_{\Omega }(P^{\varepsilon })=\overline{\left\{ \phi \in \Omega :\exists
h\in L^{2}([0,T];\mathbb{R}^{d}),\ \dot{\phi}_{s}=b(\phi _{s})+\varepsilon
h(s),\ \phi _{0}=0\right\} }.
\end{equation*}%
Projecting to time $t$ (via the continuous map $\pi _{t}:\Omega \rightarrow
\mathbb{R}^{d}$, $\pi _{t}(\phi )=\phi (t)$), we get:
\begin{equation*}
\supp(\mu ^{\varepsilon })=\pi _{t}\left( \supp_{\Omega }(P^{\varepsilon
})\right) ,
\end{equation*}%
where $\mu _{t}^{\varepsilon }=\mathcal{L}(X_{t}^{\varepsilon })=\pi
_{t}P^{\varepsilon }$. Denote that $\mu _{t}^{0}=\lim_{\varepsilon
\rightarrow 0}\mathcal{L}(X_{t}^{\varepsilon })$ to be the weak limit of the
distribution of $X_{t}^{\varepsilon }$ as $\varepsilon \rightarrow 0$ Since $%
\mu _{t}^{\varepsilon }\Rightarrow \mu _{t}^{0}$ weakly, the support is
lower semicontinuous:
\begin{equation*}
\supp(\mu _{t}^{0})\supset \liminf_{\varepsilon \rightarrow 0}\supp(\mu
_{t}^{\varepsilon }).
\end{equation*}%
Here, $\liminf_{\varepsilon \rightarrow 0}S_{\varepsilon }=\{x\in \mathbb{R}%
^{d}:\text{every neighborhood of }x\text{ intersects infinitely many }%
S_{\varepsilon }\}$. For our setting
\begin{equation*}
\supp(\mu _{t}^{0})\supset \liminf_{\varepsilon \rightarrow 0}\pi _{t}\left( %
\supp_{\Omega }(P^{\varepsilon })\right) .
\end{equation*}%
We prove the reverse inclusion $\supp(\mu _{t}^{0})\subset
\liminf_{\varepsilon \rightarrow 0}\pi _{t}(\supp_{\Omega }(P^{\varepsilon
})).$ Take any $x\in \supp(\mu _{t}^{0})$. By the definition of support, for
any open neighborhood $U$ of $x$, we have $\mu _{t}^{0}(U)>0$. By a
fundamental property of weak convergence (the $\liminf $ inequality for open
sets):
\begin{equation*}
\mu _{t}^{0}(U)\leq \liminf_{\varepsilon \rightarrow 0}\mu _{t}^{\varepsilon
}(U).
\end{equation*}%
Since $\mu _{t}^{0}(U)>0$, it follows that
\begin{equation*}
\liminf_{\varepsilon \rightarrow 0}\mu _{t}^{\varepsilon }(U)>0.
\end{equation*}%
This implies the existence of a subsequence $\varepsilon _{n}\rightarrow 0$
such that $\mu _{t}^{\varepsilon _{n}}(U)>0$ for all $n$. By the definition
of support, $\mu _{t}^{\varepsilon _{n}}(U)>0$ is equivalent to $U\cap \supp%
(\mu _{t}^{\varepsilon _{n}})\neq \emptyset $. This is precisely the
definition of $x\in \liminf_{\varepsilon \rightarrow 0}\supp(\mu
_{t}^{\varepsilon })$. Therefore,
\begin{equation*}
\supp(\mu _{t}^{0})\subset \liminf_{\varepsilon \rightarrow 0}\supp(\mu
_{t}^{\varepsilon }),
\end{equation*}%
which means
\begin{equation*}
\supp(\mu _{t}^{0})=\liminf_{\varepsilon \rightarrow 0}\supp(\mu
_{t}^{\varepsilon }).
\end{equation*}%
As $\varepsilon \rightarrow 0$, the controlled term $\varepsilon h(s)$ in
the Stroock-Varadhan characterization tends to zero uniformly on $[0,T]$.
Thus
\begin{equation*}
\supp_{\Omega }(P^{\varepsilon })\rightarrow \overline{\Phi }\quad \text{(in
Hausdorff distance)}.
\end{equation*}%
Projecting to time $t$, we get
\begin{equation*}
\liminf_{\varepsilon \rightarrow 0}\pi _{t}\left( \supp_{\Omega
}(P^{\varepsilon })\right) =\overline{\{\phi (t):\phi \in \Phi \}}.
\end{equation*}%
Note that we have proven that (see Theorem \ref{thm:main}) the zero-noise
limit selects only instantaneous escape solutions, excluding delayed
solutions and the trivial zero solution, it follows rigorously that%
\begin{equation*}
\supp(\mu _{t}^{0})=\overline{\{\phi (t):\phi \in \Phi _{+}\}}.
\end{equation*}%
Therefore,
\begin{equation*}
\liminf_{\varepsilon \rightarrow 0}\pi _{t}\left( \supp_{\Omega
}(P^{\varepsilon })\right) =\overline{\{\phi (t):\phi \in \Phi _{+}\}}.
\end{equation*}%
This completes the proof.
\end{proof}

\begin{remark}
The key insight is that the anti-Osgood condition makes the zero solution of
the ODE \emph{unstable} in the sense that there are infinitely many nearby
solutions that immediately leave the origin. The Support Theorem shows that
even an arbitrarily small noise can push the SDE solution along these
unstable directions, leading to immediate departure from the origin. This is
in contrast to the case where $b$ satisfies the standard Osgood condition,
where the zero solution is unique and stable, and the SDE solution converges
to it in the zero-noise limit.
\end{remark}

\subsection{Properties of the Support Set}

We now analyze key properties of $\supp(\mu _{t}^{0})$, which follow
directly from the theorem above.

\begin{corollary}[Non-Degeneracy]
The support $\supp(\mu _{t}^{0})\neq \{0\}$. That is, the support does not
collapse to the origin.
\end{corollary}

\begin{proof}
The immediately leaving solution $\phi _{+}$ satisfies $\phi _{+}(t)\neq 0$
for all $t>0$, so $\{\phi _{+}(t)\}\subset \supp(\mu _{t}^{0})$ and $\supp%
(\mu _{t}^{0})$ is non-trivial.
\end{proof}

\begin{corollary}[Compactness]
The support $\supp(\mu _{t}^{0})$ is a compact subset of $\mathbb{R}^{d}$
and is singular with respect to Lebesgue measure.
\end{corollary}

\begin{proof}
\noindent \textit{Compactness}: The immediately leaving solution $\phi _{+}$
is continuous on $[0,T]$, so its trajectory $\{\phi _{+}(s):0\leq s\leq t\}$
is bounded and closed (hence compact). The closure of a compact set is
compact.

\noindent \textit{Singularity:} In the one-dimensional zero-noise limit
problem, the support of the limiting distribution is formed by the integral
curves of the deterministic ODE (\ref{2.2}). Since the integral curves are
one-dimensional Lipschitz manifolds, their Hausdorff dimension does not
exceed $1$. Meanwhile, the state space itself is the one-dimensional
Euclidean space $\mathbb{R}$, and the Hausdorff dimension of its subsets is
naturally no greater than $1$. Therefore, the Hausdorff dimension of the
support of the zero-noise limiting distribution is at most $1$.

For any standard Brownian motion $W_{t}$ in dimension $d\geq 2$, the
Hausdorff dimension (cf. \cite{tay86}) of its paths is almost surely:
\begin{equation*}
\dim _{H}\bigl(W([0,T])\bigr)=2,\quad \text{a.s.}
\end{equation*}%
Thus, the upper bound of the Hausdorff dimension of the overall support set
is $2$. When the state-space dimension $d\geq 3$, the support still
satisfies
\begin{equation*}
\dim _{H}\bigl(W([0,T])\bigr)\leq 2<d.
\end{equation*}%
This is the most counterintuitive yet crucial point: Even in a
100-dimensional space, the dimension of the zero-noise limit support will
never exceed $2$. The Brownian motion trajectory is always a $2$-dimensional
fractal. The reachable set of small-noise diffusion is no \textquotedblleft
thicker\textquotedblright\ than the Brownian trajectory. Therefore:
\begin{equation*}
\dim _{H}\bigl(\supp\mu ^{0}\bigr)\leq 2,\quad \forall d\geq 2.
\end{equation*}%
Therefore, the support has Lebesgue measure zero, and the zero-noise limit $%
\mu ^{0}$ is singular with respect to Lebesgue measure.
\end{proof}

Note that $\supp(\mu _{t}^{0})$ may not be connected (see Example \ref{exa3}%
).

\begin{corollary}
All delayed solutions $\phi _{\tau ,+}$ (for $\tau \in (0,T)$) reach points
that are already in $\overline{\{\phi (t):\phi \in \Phi _{+}\}}$. Thus,
delayed solutions do not contribute any new points to the support.
\end{corollary}

\begin{remark}
The support $\supp(\mu _{t}^{0})$ depends only on the drift $b$ (not on the
Brownian motion or the dimension $d$) and is robust to small perturbations
of the noise intensity: for any open set $U$ with $U\cap \supp(\mu
_{t}^{0})\neq \emptyset $,
\begin{equation*}
\liminf_{\varepsilon \rightarrow 0}\mathbb{P}(X_{t}^{\varepsilon }\in U)>0.
\end{equation*}%
This means every point in the support is "approachable" with positive
probability for small enough noise.
\end{remark}

From \cite{tay86}, in the one-dimensional case, even though the Brownian
trajectory has Hausdorff dimension $3/2$, the support of the zero-noise
limit $\supp(\mu _{t}^{0})$ is a subset of $R$, so its dimension cannot
exceed $1$. Since the support is generated by instantaneous escape solutions
and is not a full interval, its Hausdorff dimension is strictly less than $1$%
, which implies it has Lebesgue measure zero. Therefore, the zero-noise
limit distribution $\mu _{t}^{0}$ is singular with respect to Lebesgue
measure.

In the two-dimensional case $d=2$, the Hausdorff dimension of a Brownian
trajectory equals $2$, which coincides with the dimension of the ambient
space. This means that Brownian paths are sufficiently \textquotedblleft
thick\textquotedblright\ to potentially fill out a two-dimensional region
and possess interior points. Since the support $\supp(\mu _{t}^{0})$ of the
zero-noise limit is generated by instantaneous escape solutions and bounded
above by the dimension of Brownian paths, it may have nonempty interior and
positive Lebesgue measure. Consequently, the limit distribution $\mu ^{0}$
need not be singular with respect to Lebesgue measure.

In dimensions $d\geq 3$, the Hausdorff dimension of a Brownian trajectory is
always $2$, which is strictly less than the ambient space dimension $d$.
Since the support $\supp(\mu _{t}^{0})$ of the zero-noise limit is generated
by instantaneous escape solutions and its dimension is bounded above by that
of Brownian paths, we have
\begin{equation*}
\dim _{H}\bigl(\supp(\mu _{t}^{0})\bigr)\leq 2<d.
\end{equation*}%
This implies that the support has Lebesgue measure zero in $R^{d}$.

Therefore, the zero-noise limit distribution $\mu _{t}^{0}$ is necessarily
singular with respect to Lebesgue measure.

\section{Selection Problem of the Sample Point}

\label{sect6}

Next, we address the selection problem: For any given $\omega $, which
solution $\xi ^{\ast }\left( \cdot \right) \in \phi _{+}$ will the sample
point $\omega $ select? Owing to current technical limitations, our analysis
is restricted to cases involving the continuous drift of $b$.

\begin{enumerate}
\item[\textbf{(A4)}] Assume that
\begin{equation}
\left\langle b\left( x\right) ,\vec{n}\left( x\right) \right\rangle >0,\text{%
\qquad }\forall x\in \partial B\left( 0,r\right) ,\text{ }\forall r>0,
\label{fly}
\end{equation}%
where $\vec{n}\left( x\right) $ denotes the unit outward normal.
\end{enumerate}

Apparently, the hypothesis (\ref{fly}) means that as soon as a trajectory of
(\ref{1.1}) reaches the boundary of $B\left( 0,r\right) ,$\ then it leaves
immediately. Furthermore, note that $\partial B\left( 0,r\right) $ is $C^{1}$%
, so that $\vec{n}\left( x\right) $ is continuous and well-defined.

Let $S\subset \mathbb{R}^{d}$ be a nonempty set. We denote by $d_{S}$ the
Euclidean distance function from $S,$ i.e.,
\begin{equation*}
d_{S}\left( x\right) =\inf\limits_{y\in S}\left\vert x-y\right\vert ,\qquad
\forall x\in \mathbb{R}^{d}.
\end{equation*}

\noindent Let $\mathcal{K}$ be a closed subset of $\mathbb{R}^{d}$ with
nonempty interior $\overset{\circ }{\mathcal{K}}$ and boundary $\partial
\mathcal{K}.$ We now define the so-called oriented distance from $\partial
\mathcal{K},$ i.e., the function
\begin{equation*}
\mathfrak{b}_{\mathcal{K}}\left( x\right) =\left\{
\begin{array}{ll}
-d_{\partial \mathcal{K}}\left( x\right) , & \text{if }x\in \overset{\circ }{%
\mathcal{K}}, \\
0, & \text{if }x\in \partial \mathcal{K}, \\
d_{\partial \mathcal{K}}\left( x\right) , & \text{if }x\in \mathcal{K}^{c},%
\end{array}%
\right.
\end{equation*}%
where $\mathcal{K}^{c}$ is the complimentary of $\mathcal{K}.$ In what
follows, we will use the following sets, defined for any $\varepsilon >0:$%
\begin{equation*}
\mathcal{N}_{\varepsilon }=\left\{ x\in \mathbb{R}^{d}:\left\vert \mathfrak{b%
}_{\mathcal{K}}\left( x\right) \right\vert \leq \varepsilon \right\} .
\end{equation*}%
Suppose that $\mathcal{K}$ is a compact domain of class $C^{2}.$ Then we
have
\begin{equation}
\mathcal{K}\text{ compact domain of class }C^{2}\Leftrightarrow \exists
\varepsilon _{0}>0\text{ such that }\mathfrak{b}_{\mathcal{K}}\in
C^{2}\left( \mathcal{N}_{\varepsilon _{0}}\right) .  \label{disb}
\end{equation}%
For more information see Theorem 5.6 in \cite{DZ}.

\begin{theorem}
Assume \emph{(A1), (A2), }and\emph{\ (A4)} are in force. We have
\begin{equation}
\lim\limits_{n\rightarrow +\infty }\tau ^{\varepsilon _{n}}=\tau ^{0}<\infty
\text{, }\mathbb{P}\text{-a.s.},  \label{4.2.1}
\end{equation}%
where the exit time $\tau ^{\varepsilon _{n}}$ and $\tau ^{0}$ are defined
as follows: for some $r>0$
\begin{equation}
\begin{array}{ccc}
\tau ^{\varepsilon _{n}} & := & \inf \left\{ t\geq 0:X^{\varepsilon
_{n}}\left( t\right) \notin \overline{B\left( 0,r\right) }\right\} ,%
\end{array}
\label{4.2.2}
\end{equation}%
\begin{equation}
\begin{array}{ccc}
\tau ^{0} & := & \inf \left\{ t\geq 0:X^{0}\left( t\right) \notin \overline{%
B\left( 0,r\right) }\right\} .%
\end{array}
\label{4.2.3}
\end{equation}
\end{theorem}

\begin{remark}
The mathematical implication of (\ref{4.2.1}) lies in identifying the
appropriate selection: For any fixed $\omega \in \Omega $, it follows from
Theorem \ref{thm:main} that $X^{\varepsilon _{n}}\left( \cdot ,\omega
\right) \rightarrow X^{0}\left( \cdot ,\omega \right) \in \phi _{+}$, $%
\mathbb{P}$-a.s. A natural question then arises: Which solution in $\phi
_{+} $ is the most suitable candidate for $X^{\varepsilon _{n}}\left( \cdot
,\omega \right) $? The result (\ref{4.2.1}) precisely provides the answer:
The trajectory $X^{0}\left( \cdot ,\omega \right) $, corresponding to the
stopping time $\tau ^{0}$, is the correct choice.
\end{remark}

\begin{proof}
By Theorem \ref{thm:main}, we have (\ref{4.2.1}) holds since for any $\xi
^{\ast }\left( \cdot \right) \in \phi _{+},$ $\exists t^{\ast }>0,$ such
that $\xi ^{\ast }\left( t^{\ast }\right) \in \partial B\left( 0,r\right) $
for some $r>0.$ Therefore, for arbitrarily given $T>0$,
\begin{equation}
X^{\varepsilon _{n}}\left( \cdot \right) \rightarrow X^{0}\left( \cdot
\right) \text{ in }C\left( \left[ 0,T\right] ;\mathbb{R}^{d}\right) ,\qquad
\mathbb{P}\text{-a.s.}  \label{4.2.18}
\end{equation}%
We prove (\ref{4.2.1}) by contradiction. Now set
\begin{equation*}
\mathcal{A}_{1}:=\left\{ \omega \in \Omega \left\vert \limsup_{n\rightarrow
+\infty }\tau ^{\varepsilon _{n}}\left( \omega \right) >\tau \left( \omega
\right) \right. \right\} .
\end{equation*}%
Suppose that $\mathbb{P}\left( \mathcal{A}\right) >0.$ For any $\omega \in
\mathcal{A}$ set $\mathcal{X}\left( t\right) =X\left( \tau \left( \omega
\right) +t\right) $, $\forall t\geq 0.$ Immediately,
\begin{equation*}
\mathcal{X}\left( t\right) -\mathcal{X}\left( 0\right)
=\int\limits_{0}^{t}b\left( \mathcal{X}\left( s\right) \right) \mbox{\rm
d}s,\qquad \mathcal{X}\left( 0\right) \in \partial \mathcal{K},\text{ }%
\forall t\geq 0.
\end{equation*}%
Define a closed set $\mathcal{N}_{\mu }=\left\{ x\in \mathbb{R}%
^{d}:\left\vert \mathfrak{b}_{\overline{B\left( 0,r\right) }}\left( x\right)
\right\vert \leq \mu \right\} ,$where $\mu >0$ is small enough such that $%
\mathfrak{b}_{\overline{B\left( 0,r\right) }}\in C^{2}\left( \mathcal{N}%
_{\mu }\right) $ since $\overline{B\left( 0,r\right) }$ is a compact domain
of class $C^{2}$ (see (\ref{disb})). By assumption (A4), we get
\begin{equation*}
2\alpha :=\inf\limits_{x\in \partial \overline{B\left( 0,r\right) }%
}\left\langle b\left( x\right) ,\nabla \mathfrak{b}_{\overline{B\left(
0,r\right) }}\left( x\right) \right\rangle >0,\text{ for some positive }%
\alpha >0.
\end{equation*}%
Note that Lipschitz constant of $b_{\overline{B\left( 0,r\right) }}\left(
x\right) $ is $1.$ Taking $\eta \in \left( 0,\mu \right) $ small enough,
such that $\forall y\in \mathcal{N}_{\eta }$, we have $\left\langle
\mathfrak{b}\left( y\right) ,\nabla b_{\overline{B\left( 0,r\right) }}\left(
y\right) \right\rangle >\alpha $ by continuity of $b.$

In particular, picking $\overline{t}_{\eta }>0$ such that $\forall t\in %
\left[ 0,\overline{t}_{\eta }\right] ,$ we have
\begin{equation}
-\eta \leq \mathfrak{b}_{\overline{B\left( 0,r\right) }}\left( \mathcal{X}%
^{x}\left( t\right) \right) \leq \eta .  \label{4.2.19}
\end{equation}%
Indeed, (\ref{4.2.19}) yields that $\left\vert \mathcal{X}\left( t\right)
\right\vert \leq r+\eta ,$ which implies that%
\begin{equation*}
\left\vert \mathcal{X}\left( 0\right) \right\vert +\left\vert
\int\limits_{0}^{t}b\left( \mathcal{X}\left( s\right) \right) \mbox{\rm
d}s\right\vert \leq r+tM_{b}\leq r+\eta .
\end{equation*}%
Consequently, we have $\overline{t}_{\eta }=\frac{\eta }{M_{b}},$ which is
independent of $x.$ Also note that
\begin{eqnarray*}
\frac{1}{2}\frac{\mbox{\rm d}}{\mbox{\rm d}t}\mathfrak{b}_{\overline{B\left(
0,r\right) }}\left( \mathcal{X}\left( t\right) \right) &=&\left\langle
\nabla \mathfrak{b}_{\overline{B\left( 0,r\right) }}\left( \mathcal{X}\left(
t\right) \right) ,\mathcal{X}^{\prime }\left( t\right) \right\rangle \\
&=&\left\langle \nabla \mathfrak{b}_{\overline{B\left( 0,r\right) }}\left(
\mathcal{X}\left( t\right) \right) ,b\left( \mathcal{X}\left( t\right)
\right) \right\rangle \\
&>&\alpha .
\end{eqnarray*}%
and
\begin{equation*}
\mathfrak{b}_{\overline{B\left( 0,r\right) }}\left( \mathcal{X}\left(
0\right) \right) =0,\text{\qquad since }\mathcal{X}\left( 0\right) \in
\partial \mathcal{K}\text{.}
\end{equation*}%
Thus
\begin{equation*}
\mathfrak{b}_{\overline{B\left( 0,r\right) }}\left( \mathcal{X}\left(
t\right) \right) \geq 2\alpha t,\qquad t\in \left[ 0,\overline{t}_{\eta }%
\right] .
\end{equation*}%
We have, for $\forall \omega \in \mathcal{A},$%
\begin{equation*}
\left\vert \widetilde{X}^{\varepsilon _{n}}\left( \widetilde{\tau }\left(
\omega \right) +\overline{t}_{\eta }\right) -\mathcal{X}\left( \overline{t}%
_{\eta }\right) \right\vert <\alpha \overline{t}_{\eta },
\end{equation*}%
for
\begin{equation}
n>N_{1}\left( \omega ,\overline{t}_{\eta }\right)  \label{4.2.20}
\end{equation}%
large enough depending only on $\omega \in \widetilde{\Omega }/\mathcal{N}$
and $\overline{t}_{\eta }$ since the metric of the uniform convergence.

Also observe that \
\begin{equation*}
\mathcal{X}\left( \overline{t}_{\eta }\right) =\widetilde{X}\left(
\widetilde{\tau }\left( \omega \right) +\overline{t}_{\eta }\right) .
\end{equation*}%
Hence, for any $\omega \in \mathcal{A},$ we have
\begin{eqnarray*}
\mathfrak{b}_{\overline{B\left( 0,r\right) }}\left( X^{\varepsilon
_{n}}\left( \widetilde{\tau }\left( \omega \right) +\overline{t}_{\eta
}\right) \right) &\geq &\mathfrak{b}_{\overline{B\left( 0,r\right) }}\left(
\mathcal{X}\left( \overline{t}_{\eta }\right) \right) -\left\vert \mathcal{X}%
\left( \overline{t}_{\eta }\right) -\widetilde{X}^{\varepsilon _{n}}\left(
\widetilde{\tau }\left( \omega \right) +\overline{t}_{\eta }\right)
\right\vert \\
&\geq &2\alpha \overline{t}_{\eta }-\alpha \overline{t}_{\eta }>0,
\end{eqnarray*}%
which implies that
\begin{equation*}
\tau ^{\varepsilon _{n}}\left( \omega \right) \leq \tau \left( \omega
\right) +\overline{t}_{\eta }.
\end{equation*}

Passing to limsup as $n\rightarrow +\infty $ followed by $\eta \rightarrow
0, $ of course, $\overline{t}_{\eta }\rightarrow 0,$ we obtain $%
\limsup_{n\rightarrow +\infty }\tau ^{\varepsilon _{n}}\left( \omega \right)
\leq \tau \left( \omega \right) ,$ which is a contradiction to the
definition of $\omega \in \mathcal{A}.$

Now we define
\begin{equation*}
\mathcal{A}_{2}:=\left\{ \omega \in \Omega \left\vert \liminf_{n\rightarrow
+\infty }\right. \tau ^{\varepsilon _{n}}\left( \omega \right) <\tau \left(
\omega \right) \right\} .
\end{equation*}%
Now fixing $\omega \in \mathcal{A}_{2},$ for any $\delta >0$ small enough
such that $\delta \in \left( 0,\tau \left( \omega \right) \right) ,$ it is
easy to prove by absurdum that there exists
\begin{equation}
n>N_{2}\left( \omega ,\delta \right)  \label{4.2.21}
\end{equation}%
since the topology of the uniform convergence on compact, such that%
\begin{equation*}
\left\vert X^{\varepsilon _{n}}\left( \tau \left( \omega \right) -\delta
\right) -X\left( \tau \left( \omega \right) -\delta \right) \right\vert +%
\mathfrak{b}_{\overline{B\left( 0,r\right) }}\left( X\left( \tau \left(
\omega \right) -\delta \right) \right) <0,
\end{equation*}%
from which we conclude that
\begin{equation*}
\tau ^{\varepsilon _{n}}\left( \omega \right) \geq \tau \left( \omega
\right) -\delta .
\end{equation*}%
Since arbitrary of $\delta ,$ we have $\liminf_{n\rightarrow +\infty }\tau
^{\varepsilon _{n}}\left( \omega \right) \geq \tau \left( \omega \right) ,$
which is a contradiction to the definition of $\omega \in \mathcal{A}_{2}.$

Now fixing $\omega \in \Omega ,$ from (\ref{4.2.20}) and (\ref{4.2.21}) by
the method used above, we set
\begin{equation}
\beta :=\min \left\{ \overline{t}_{\eta },\delta \right\} \text{ and }%
N_{3}\left( \omega ,\beta \right) :=\max \left\{ N_{1}\left( \omega ,%
\overline{t}_{\eta }\right) ,N_{2}\left( \omega ,\delta \right) \right\} .
\label{4.2.22}
\end{equation}%
Immediately, for $n>N_{3}\left( \omega ,\beta \right) $ large enough, we have%
\begin{equation}
\left\vert \tau ^{\varepsilon _{n}}\left( \omega \right) -\tau \left( \omega
\right) \right\vert <\beta .  \label{4.2.23}
\end{equation}%
Now we end the proof by observing that
\begin{equation}
\mathbb{P}\left[ \lim\limits_{n\rightarrow +\infty }\tau ^{\varepsilon
_{n}}=\tau \right] =1.  \label{4.2.24}
\end{equation}%
The proof is complete.
\end{proof}

\section{Estimation of the Exit Time for Limit Solutions}

\label{sect7}

In this section, we derive upper and lower bounds on the exit time of the
process $|X_t^\varepsilon|^2$ from a Euclidean ball centered at the origin.

Define for every $y\in \mathbb{R}_{+}$
\begin{equation*}
\begin{array}{lll}
g_{\min }\left( y\right) & := & \min_{\left\{ x\left\vert \left\vert
x\right\vert ^{2}=y\right. \right\} \cap \mathcal{D}\left( b\right) }\left\{
2\left\langle b\left( x\right) ,x\right\rangle \right\} ,\text{ } \\
g_{\max }\left( y\right) & := & \max_{\left\{ x\left\vert \left\vert
x\right\vert ^{2}=y\right. \right\} \cap \mathcal{D}\left( b\right) }\left\{
2\left\langle b\left( x\right) ,x\right\rangle \right\} .%
\end{array}%
\end{equation*}%
%
%
%
%
%
%
%
%
%
%
%
%
%
%
%
%It is fairly easy to check that $g_{\min }\left( 0\right) =0$ ($g_{\max
%}\left( y\right) =0$) if $b\left( 0\right) =0$. While under (A1), it is
%right-continuous at $0$ since the continuity of $b.$ Besides, under (A1),
%(A2), (A4) it follows that $g\left( y\right) \geq 0,$ for $\forall y>0.$
It follows directly from the definition that $g_{\min }(0)=0$ (respectively,
$g_{\max }(0)=0$) whenever $b(0)=0$. Under Assumption (A1), both $g_{\min }$
and $g_{\max }$ are right-continuous at $0$, as a consequence of the
continuity of $b$. Moreover, under Assumptions (A1), (A2), and (A4), we have
$g(y)\geq 0$ for all $y>0$.

\begin{remark}
Although Assumption (A1) can ensure the continuity of $g_{\min }\left(
y\right) $, similar properties can still be preserved for certain
discontinuous drifts, as demonstrated in Example \ref{exadis} via so called
Filippov solution. Consequently, condition (A1) may potentially be relaxed.
\end{remark}

We impose the following assumption.

\begin{enumerate}
\item[\textbf{(A5)}] For any $r>0,$ suppose that
\begin{equation}
0<\int_{0}^{r}\frac{1}{g_{\max }\left( y\right) }\mathrm{d}y\leq \int_{0}^{r}%
\frac{1}{g_{\min }\left( y\right) }\mathrm{d}y<+\infty \text{,}  \label{3.11}
\end{equation}%
and functions $g_{\min }$ and $g_{\max }$ are continuously differentiable on
$\mathbb{R}_{+}\setminus \{0\}$.
\end{enumerate}

\begin{example}
\label{exa2}Consider a \textquotedblleft non-symmetric\textquotedblright\
system,
\begin{equation}
b\left( x\right) =\left\{
\begin{array}{ll}
x^{\frac{1}{2}}, & x\geq 0, \\
-3\left\vert x\right\vert ^{\frac{1}{2}}, & x<0.%
\end{array}%
\right.  \label{3.5}
\end{equation}%
Obviously, $\left\{ -\frac{9t^{2}}{4},\frac{t^{2}}{4}\right\} _{t\geq 0}$\
contains all the leaving solutions. While $g_{\min }\left( y\right) =2y^{%
\frac{3}{4}}$ and $g_{\max }\left( y\right) =6y^{\frac{3}{4}},$ which yields
for, $\forall r\geq 0,$
\begin{equation*}
\int_{0}^{r}\frac{1}{g_{\max }\left( y\right) }\mathrm{d}y=\frac{2}{3}r^{%
\frac{1}{4}}<\int_{0}^{r}\frac{1}{g_{\min }\left( y\right) }\mathrm{d}y=2r^{%
\frac{1}{4}}.
\end{equation*}
\end{example}

Note that Example \ref{exa5} does not satisfy (A5) since $g_{\min }\left(
y\right) \equiv 0.$ Indeed, It is easy to check that
\begin{eqnarray*}
g_{\min }\left( z\right) &=&\min_{x^{2}+y^{2}=z}\left\langle 2b\left( \left[
\begin{array}{c}
x \\
y%
\end{array}%
\right] \right) ,\left[
\begin{array}{c}
x \\
y%
\end{array}%
\right] \right\rangle \\
&=&\min_{x^{2}+y^{2}=z}4\left( \left\vert x\right\vert \sqrt{\left\vert
y\right\vert }+\left\vert y\right\vert \sqrt{\left\vert x\right\vert }\right)
\\
&\equiv &0,\text{ with }z=x^{2}+y^{2},
\end{eqnarray*}%
which implies that there exist infinitely many singular points,
specifically, $\left\{ \left( x,y\right) ^{\top }\in \mathbb{R}%
^{2}\left\vert xy=0\right. \right\} $.

\begin{theorem}
Under Assumptions \emph{(A3) }and\emph{\ (A5)}, define $Y_{t}^{\varepsilon
}=|X_{t}^{\varepsilon }|^{2}$. Then, for any $r_{0}>0$, the exit time $\tau
^{0}$ --- defined as the first hitting time of the boundary point $r_{0}$ by
the limiting process $Y^{0}(\cdot )=\lim_{\varepsilon \rightarrow
0}Y^{\varepsilon }(\cdot )$ --- satisfies
\begin{equation}
\int_{0}^{r_{0}}\frac{1}{g_{\max }(y)}\,dy\leq \mathbb{E}\big[\tau ^{0}\big]%
\leq \int_{0}^{r_{0}}\frac{1}{g_{\min }(y)}\,dy<\infty .  \label{limest}
\end{equation}
\end{theorem}

\begin{proof}
Our first goal is to obtain the integral $\int_{0}^{|X_{t}^{\varepsilon
}|^{2}}\frac{1}{g_{\min }\left( y\right) +n\varepsilon ^{2}}dy.$ To this
end, let $Y_{t}^{\varepsilon }=|X_{t}^{\varepsilon
}|^{2}=\sum_{i=1}^{d}(X_{t}^{\varepsilon ,i})^{2}$. Applying the
multidimensional It\^{o}'s formula to $Y_{t}^{\varepsilon }$
\begin{equation}
dY_{t}^{\varepsilon }=\sum_{i=1}^{d}\frac{\partial Y^{\varepsilon }}{%
\partial x_{i}}dX_{t}^{\varepsilon ,i}+\frac{1}{2}\sum_{i,j=1}^{d}\frac{%
\partial ^{2}Y^{\varepsilon }}{\partial x_{i}\partial x_{j}}\left(
dX_{t}^{\varepsilon ,i}dX_{t}^{\varepsilon ,j}\right) .  \label{mito1}
\end{equation}%
Compute the partial derivatives:$\frac{\partial Y^{\varepsilon }}{\partial
x_{i}}=2x_{i}$ $\frac{\partial ^{2}Y^{\varepsilon }}{\partial x_{i}\partial
x_{j}}=2\delta _{ij}$ (Kronecker delta). Substituting $dX_{t}^{\varepsilon
,i}=b_{i}(X_{t}^{\varepsilon })dt+\varepsilon dW_{t}^{i}$ into (\ref{mito1}%
), we have
\begin{align*}
dY_{t}^{\varepsilon }& =\sum_{i=1}^{d}2X_{t}^{\varepsilon ,i}\left(
b_{i}(X_{t}^{\varepsilon })dt+\varepsilon dW_{t}^{i}\right) +\frac{1}{2}%
\sum_{i=1}^{d}2\cdot \left( \varepsilon dW_{t}^{i}\right) ^{2} \\
& =2\sum_{i=1}^{d}X_{t}^{\varepsilon ,i}b_{i}(X_{t}^{\varepsilon
})dt+2\varepsilon \sum_{i=1}^{d}X_{t}^{\varepsilon ,i}dW_{t}^{i}+\varepsilon
^{2}\sum_{i=1}^{d}dt \\
& =2\langle X_{t}^{\varepsilon },b(X_{t}^{\varepsilon })\rangle
dt+2\varepsilon \langle X_{t}^{\varepsilon },dW_{t}\rangle +d\varepsilon
^{2}dt.
\end{align*}%
Define the auxiliary function%
\begin{equation*}
F(y)=\int_{0}^{y}\frac{1}{g_{\min }\left( y\right) +d\varepsilon ^{2}}dz.
\end{equation*}%
Then
\begin{equation*}
F^{\prime }(y)=\frac{1}{g_{\min }(y)+n\varepsilon ^{2}},\quad F^{\prime
\prime }(y)=-\frac{g_{\min }^{\prime }(y)}{\left( g_{\min }(y)+d\varepsilon
^{2}\right) ^{2}}\text{ since (A5).}
\end{equation*}%
Applying It\^{o}'s formula to $F(Y_{t}^{\varepsilon })$, it yields
\begin{equation*}
dF(Y_{t}^{\varepsilon })=F^{\prime }(Y_{t}^{\varepsilon
})dY_{t}^{\varepsilon }+\frac{1}{2}F^{\prime \prime }(Y_{t}^{\varepsilon
})(dY_{t}^{\varepsilon })^{2}.
\end{equation*}%
Substituting $dY_{t}^{\varepsilon }$ and $(dY_{t}^{\varepsilon })^{2}=\left(
2\varepsilon \langle X_{t}^{\varepsilon },dW_{t}\rangle \right)
^{2}=4\varepsilon ^{2}|X_{t}^{\varepsilon }|^{2}dt=4\varepsilon
^{2}Y_{t}^{\varepsilon }dt$ into above, we get
\begin{align*}
dF(Y_{t}^{\varepsilon })& =\frac{1}{g_{\min }(Y_{t})+d\varepsilon ^{2}}\left[
\left( 2\langle X_{t}^{\varepsilon },b(X_{t}^{\varepsilon })\rangle
+d\varepsilon ^{2}\right) dt+2\varepsilon \langle X_{t}^{\varepsilon
},dW_{t}\rangle \right] \\
& \quad +\frac{1}{2}\left( -\frac{g_{\min }^{\prime }(Y_{t}^{\varepsilon })}{%
\left( g_{\min }(Y_{t}^{\varepsilon })+d\varepsilon ^{2}\right) ^{2}}\right)
\cdot 4\varepsilon ^{2}Y_{t}^{\varepsilon }dt.
\end{align*}%
From (A5), the drift term simplifies to
\begin{equation*}
\frac{\left( 2\langle X_{t}^{\varepsilon },b(X_{t}^{\varepsilon })\rangle
+d\varepsilon ^{2}\right) }{g_{\min }(Y_{t}^{\varepsilon })+d\varepsilon ^{2}%
}dt\geq dt.
\end{equation*}%
Therefore
\begin{equation*}
dF(Y_{t}^{\varepsilon })\geq dt+\frac{2\varepsilon \langle
X_{t}^{\varepsilon },dW_{t}\rangle }{g_{\min }(Y_{t}^{\varepsilon
})+d\varepsilon ^{2}}-\frac{2\varepsilon ^{2}Y_{t}g_{\min }^{\prime
}(Y_{t}^{\varepsilon })}{\left( g_{\min }(Y_{t}^{\varepsilon })+d\varepsilon
^{2}\right) ^{2}}dt.
\end{equation*}%
In small-noise asymptotic analysis, we typically focus on expectations or
limiting behavior. Taking the expectation of $dF(Y_{t}^{\varepsilon })$, the
martingale term $dW_{t}$ has expectation 0, yielding:
\begin{equation*}
\mathbb{E}\left[ dF(Y_{t}^{\varepsilon })\right] \geq \mathbb{E}\left[ 1-%
\frac{2\varepsilon ^{2}Y_{t}g_{\min }^{\prime }(Y_{t}^{\varepsilon })}{%
\left( g_{\min }(Y_{t}^{\varepsilon })+d\varepsilon ^{2}\right) ^{2}}\right]
dt.
\end{equation*}%
Particularly, for some $r_{0}=Y_{\tau ^{\varepsilon }}^{\varepsilon }$ small
enough, we have,
\begin{equation*}
\int_{0}^{r_{0}}\frac{1}{g_{\min }\left( y\right) +d\varepsilon ^{2}}\mathrm{%
d}y\geq \mathbb{E}\left[ \tau ^{\varepsilon }-\int_{0}^{\tau ^{\varepsilon }}%
\frac{2\varepsilon ^{2}Y_{t}^{\varepsilon }g_{\min }^{\prime
}(Y_{t}^{\varepsilon })}{\left( g_{\min }(Y_{t}^{\varepsilon })+d\varepsilon
^{2}\right) ^{2}}dt\right] ,
\end{equation*}%
where $\tau ^{\varepsilon }$ denotes the exit time from $\left[ 0,r_{0}%
\right] $ for $Y^{\varepsilon }.$ Letting $\varepsilon \rightarrow 0$, from
Theorem \ref{thm:main}, we have%
\begin{equation*}
\mathbb{E}\left[ \tau ^{0}\right] \leq \int_{0}^{r_{0}}\frac{1}{g_{\min
}\left( y\right) }\mathrm{d}y,
\end{equation*}%
where $\tau ^{0}$ denotes the exit time from $\left[ 0,r_{0}\right] $ for
limit solution $Y^{0}\left( \cdot \right) $ as $\varepsilon \rightarrow 0.$
Similarly, proceeding above method again, one can get the following estimate%
\begin{equation*}
\mathbb{E}\left[ \tau ^{0}\right] \geq \int_{0}^{r_{0}}\frac{1}{g_{\max
}\left( y\right) }\mathrm{d}y.
\end{equation*}%
Consequently, we get desired result.
\end{proof}

It is necessary to point it out that the aforementioned method can be
directly applied to address systems with discontinuous drift terms like:

\begin{example}
\label{exadis}Consider $b_{1}\ $defined in (\ref{b1b2}) again$.$ The
Filippov set value map is defined by
\begin{equation*}
F_{\text{sign}\left( \cdot \right) }\left( x\right) =\left\{
\begin{array}{cc}
1 & \text{if }x>0, \\
\left[ -1,1\right] & \text{if }x=0, \\
-1 & \text{if }x<0.%
\end{array}%
\right.
\end{equation*}%
One can verify that $\xi \left( t\right) \equiv 0,$ $\xi \left( t\right) =t$
and $\xi \left( t\right) \equiv -t$ are Filippov solutions to
\begin{equation*}
\left\{
\begin{array}{l}
\xi ^{\prime }\left( t\right) \in F_{\text{sign}\left( \cdot \right) }\left(
\xi \left( t\right) \right) , \\
\xi \left( t\right) =0.%
\end{array}%
\right.
\end{equation*}%
Due to the symmetry, $g_{\min }\left( y\right) =g_{\max }\left( y\right) =2%
\sqrt{y}.$ From (\ref{limest}), we have%
\begin{equation*}
\mathbb{E}\left[ \int_{0}^{y^{\varepsilon }\left( t\right) }\frac{1}{2\sqrt{y%
}+\varepsilon ^{2}}\mathrm{d}y\right] =t.
\end{equation*}%
Letting $\varepsilon \rightarrow 0,$ we derive
\begin{equation*}
\mathbb{E}\left[ y^{0}\left( t\right) \right] =t^{2}>0\text{,}
\end{equation*}%
which means that $\xi \left( t\right) =-t$ and $\xi \left( t\right) =t$ are
limiting solutions with probability $1/2$.
\end{example}

\section{Applications}

\label{sect8}

Our results have several potential applications in various fields:

\paragraph{Portfolio Optimization and the Efficient Frontier}

The classical mean-variance portfolio theory can be formulated as a
constrained stochastic control problem in continuous time, with the
objective of minimizing the portfolio variance for a given level of expected
return, or maximizing the expected return for a given level of risk. The
zero-noise limit theory helps researchers understand the geometric structure
and asymptotic behavior of the portfolio efficient frontier as market
fluctuations tend to zero.

When the noise intensity $\varepsilon \rightarrow 0$, the random
fluctuations of the portfolio disappear, and the efficient frontier
degenerates into a deterministic low-dimensional manifold (such as a
straight line or curve). This implies that the risk-return trade-off of the
portfolio is strictly constrained to this manifold. For example, in a
multi-asset portfolio, the Hausdorff dimension of the support set of the
zero-noise limit distribution does not exceed $2$, leading to the efficient
frontier dimension also not exceeding $2$, regardless of the asset
dimension. This conclusion provides a theoretical basis for simplifying
portfolio optimization problems---even when the asset dimension is very
high, the core characteristics of the efficient frontier can still be
characterized through low-dimensional analysis.

Within the Knightian uncertainty framework, the zero-noise limit theory has
also been applied to analyze the no-arbitrage conditions of investment
portfolios, proving the intrinsic connection between the singularity of the
zero-noise limit distribution and market no-arbitrage: when the limit
distribution is singular, there exist return states in the market that
cannot be replicated by trading strategies, which is an essential feature of
incomplete markets. This research enriches the connotation of financial
market no-arbitrage theory and provides a new perspective for portfolio risk
pricing.

\paragraph{Brownian Motion and Diffusion Processes}

Einstein's theory of Brownian motion is the origin of stochastic processes.
The diffusion behavior of particles in high-dimensional space is described
by the Langevin equation:
\begin{equation*}
dX_t = -\nabla V(X_t) dt + \varepsilon dW_t
\end{equation*}
where $V(x)$ is the potential function, $-\nabla V(x)$ is the potential
gradient force acting on the particle, and $\varepsilon dW_t$ is the thermal
noise perturbation. The zero-noise limit theory provides an important tool
for analyzing the asymptotic behavior of particle diffusion.

When $\varepsilon \rightarrow 0$, the motion of the particle tends to a
deterministic gradient flow $\dot{x}=-\nabla V(x)$. The support set of the
zero-noise limit distribution $\supp(\mu ^{0})$ characterizes all reachable
regions of the particle in the potential field. In high-dimensional spaces ($%
d\geq 3$), since the Hausdorff dimension of Brownian motion trajectories is $%
2$, the support set is necessarily a low-dimensional \textquotedblleft thin
set\textquotedblright . This is closely related to the \textquotedblleft
ergodicity breaking\textquotedblright\ phenomenon in statistical
mechanics---the diffusion behavior of particles is restricted to
low-dimensional submanifolds and cannot traverse the entire high-dimensional
space.

\paragraph{Fractal Structure of the Support Set}

The support set $\supp(\mu^0)$ of the zero-noise limit distribution is
usually a fractal set, whose Hausdorff dimension is jointly determined by
the dimension of Brownian motion trajectories and the geometric properties
of the drift coefficient. This provides a concrete probabilistic
construction for the generation mechanism of fractional sets in geometric
measure theory.

The fractal characteristics of the support set differ significantly across
dimensions:

\begin{itemize}
\item In the one-dimensional case, the Hausdorff dimension of the support
set ranges between $1$ and $1.5$, representing a typical fractal curve.

\item In the two-dimensional case, the support set may have a non-empty
interior (when the drift coefficient satisfies certain conditions) or may be
a fractal region.

\item In three or higher dimensions, the Hausdorff dimension of the support
set does not exceed $2$, making it a low-dimensional \textquotedblleft thin
set\textquotedblright\ whose fractal structure is jointly determined by the
fractal characteristics of Brownian motion trajectories and the constraints
of the drift term.
\end{itemize}

\paragraph{Other Fields}

In addition to the aforementioned domains, the zero-noise limit theory has
also been applied to artificial intelligence, machine learning, cellular
automata, and other fields. In machine learning, the exploration behavior in
reinforcement learning can be modeled as a random strategy with small noise.
Zero-noise limit analysis reveals that in the \textquotedblleft exploration
vanishing\textquotedblright\ limit, the behavior of the agent tends to a
deterministic optimal policy. The dimension of the support set characterizes
the range of state spaces reachable by the agent during exploration,
providing a theoretical foundation for designing efficient exploration
strategies. %\begin{itemize}
%\item \textit{Financial Mathematics}: In models of asset liquidation, the
%anti-Osgood condition can represent a strong selling pressure that pushes
%prices to zero. Our theorem shows that even small market noise prevents
%prices from staying at zero, which is consistent with real-world market
%behavior.
%
%%\item \textit{Stochastic Control}: In optimal control problems with hard
%%terminal constraints, the anti-Osgood condition can arise when the control
%%is required to drive the state to zero in finite time. Our result shows that
%%small noise makes it impossible to exactly reach zero, which has
%%implications for the existence of optimal controls.
%
%\item \textit{Physics}: In models of phase transitions, the anti-Osgood
%condition can represent a strong force that drives the system to a new
%equilibrium state. Our theorem shows that small thermal fluctuations prevent
%the system from staying at the metastable state, leading to immediate
%transitions.
%\end{itemize}

\section{Conclusion and Future Work}

\label{sect9}

We have established a rigorous stochastic analytic framework for resolving
solution non-uniqueness in high-dimensional ODEs satisfying the radial
Osgood non-uniqueness condition at the origin, including the framework of
Filippov solution. Our key results are:

\begin{itemize}
\item In the case where the deterministic ODE does not admit a unique
solution (Filippov sense), the zero-noise limit $\varepsilon \rightarrow 0$
naturally provides a clear selection mechanism for the multiple solutions of
the ODE: solutions that satisfy certain stability and approximability
properties will be preferentially selected, while those that are
\textquotedblleft unstable and unsustainable\textquotedblright\ under any
small noise perturbation will be naturally excluded. The zero-noise limit of
the SDE solution sequence inherits the instantaneous escape property,
excluding delayed escape solutions and the trivial zero solution from the
limit set. Specifically, solutions that stay at the origin and then leave
require the trajectory to remain exactly at the origin $0$ for a positive
length of time $T>0$. However, this behavior is extremely unstable under any
arbitrarily small Brownian perturbation $\varepsilon \sigma dW_{t}$: uniform
ellipticity ensures that the random perturbation is non-degenerate in all
directions, making it impossible to maintain the trajectory at the origin
for an extended period. In contrast, the immediately-leaving solutions can
exist stably under small noise, and their trajectory behavior depends
continuously on the noise intensity $\varepsilon $, exhibiting good
approximability. Therefore, the immediately-leaving solutions are the only
ones that can be approximated by the SDE solution sequence $%
\{X_{t}^{\varepsilon }\}_{\varepsilon >0}$, and thus become the uniquely
selected limit solutions under the zero-noise limit.

\item The support has a Hausdorff dimension strictly less than the ambient
space dimension $d$, which implies that the limit distribution $\mu ^{0}$ is
singular with respect to the Lebesgue measure. Besides, the support set is
compact but not necessarily connected, and its geometric structure is solely
determined by the deterministic dynamics of the drift $b$ and the
instantaneous escape Filippov solutions, independent of the Brownian motion
and the space dimension $d$.
\end{itemize}

These results highlight the role of stochastic stability in resolving
deterministic non-uniqueness: delayed escape solutions are inherently
unstable under even the smallest non-degenerate random perturbations, while
instantaneous escape solutions are robust and form the unique physically
meaningful solution.

Future research directions several promising directions for future research
include: 1) Extending the framework to degenerate diffusion matrices, where
the selection principle may depend on the direction of degeneracy (e.g.,
degenerate noise in angular directions only); 2) Analyzing time-dependent
drifts $b(t,x)$ and non-autonomous ODEs/SDEs; 3) Applying the solution
selection principle to concrete applications in stochastic control (e.g.,
optimal stopping problems) and mathematical finance (e.g., asset liquidation
models with singular drift terms); 4) Studying the rate of convergence of
the SDE solution sequence to the instantaneous escape solution (e.g.,
polynomial or exponential convergence in noise intensity $\varepsilon $). 5)
Developing of numerical algorithms for high-dimensional systems: Combining
techniques from fractal geometry and machine learning and PDE, we develop
efficient numerical algorithms for computing the support set (see Example %
\ref{exas}), Hausdorff dimension, and singular measure of the zero-noise
limit distribution in high-dimensional systems. This provides tool support
for the analysis of practical complex systems.

%\section*{Acknowledgements}
%
%The authors thank the anonymous reviewers for their valuable comments and
%suggestions, which significantly improved the quality of this paper.

% References
%\bibliography{references}

% Appendix (optional)
%\appendix
%\section{Technical Lemmas}
%Additional technical lemmas and proofs can be included here.

\section{Appendix}

\label{app}

\subsection{Definition of Filippov solution}

Indeed, if $b$ is measurable and locally bounded$,$ there is no existence
result for classical solutions of the ODE (\ref{1.1}). Therefore, a
generalized notion of solution can be employed due to Filippov \cite{fili88}
as follows:

\begin{definition}
\label{filir}Let us consider a function $f:\mathbb{R}^{d}\rightarrow \mathbb{%
R}^{d}$ to which we associate the following set-valued map---called
Filippov's regularization of $F_{f}$%
\begin{equation*}
F_{f}\left( x\right) :=\bigcap_{m\left( N\right) =0}\bigcap_{\delta
>0}cof\left( \left( x+\delta B\right) \backslash N\right)
\end{equation*}%
the first intersection is taken over all sets of $\mathbb{R}^{d}$, being
negligible with respect to the Lebesgue measure $m$, $B$ is the closed unit
ball and $co$ denotes the closed convex hull.
\end{definition}

\begin{definition}
\label{filis}An absolutely continuous solution $\xi \left( t\right) \in
\mathbb{R}^{d}$ is a Filippov solution of (\ref{1.1}) \emph{if and only if}
it is a solution of the following differential inclusion
\begin{equation}
\xi ^{\prime }\left( t\right) \in F_{f}\left( \xi \left( t\right) \right) ,%
\text{ }\xi ^{\prime }\left( 0\right) =x,\text{ }\forall t\geq 0.
\label{fili}
\end{equation}
\end{definition}

Note that the set-valued map $F_{f}$ is upper semi continuous with compact
convex values. This implies that the differential inclusion (\ref{fili}) has
a nonempty set of (local) solutions (cf. \cite{ac84}). The map $x\rightarrow
F_{f}(x)$ is single-valued if and only if there exists a continuous function
$g$ which coincides almost everywhere with $f$. In this case we have $%
F_{f}(x)=\{g(x)\}$ for almost all $x\in \mathbb{R}^{d}$.

\subsection{One-Dimensional Comparison Process}

\label{sc}

We now construct a one-dimensional SDE whose solution stochastically
dominates the radial process $R_t^\varepsilon$ from below. This reduces the
high-dimensional problem to a scalar problem, which is easier to analyze.

\begin{definition}[Comparison SDE]
\label{def:comparison_sde} For each $\varepsilon >0$, define the
one-dimensional SDE:
\begin{equation}
dr_{t}^{\varepsilon }=\left( -\omega (r_{t}^{\varepsilon })+\frac{%
\varepsilon ^{2}c_{0}}{2r_{t}^{\varepsilon }}\right) \mathrm{d}t+\varepsilon
\sqrt{\lambda }\mathrm{d}\beta _{t},\quad r_{0}^{\varepsilon }=0,
\label{4.3}
\end{equation}%
where $\beta _{t}$ is a one-dimensional standard Brownian motion.
\end{definition}

The key result of this section is the stochastic dominance of the radial
process by the comparison process:

\begin{theorem}[Stochastic Dominance]
\label{thm:comparison} Let $R_{t}^{\varepsilon }$ be the radial process of
the SDE (\ref{2.1}) and $r_{t}^{\varepsilon }$ be the unique weak solution
to the comparison SDE (\ref{4.3}). Then:
\begin{equation*}
R_{t}^{\varepsilon }\geq r_{t}^{\varepsilon }\quad \mathbb{P}\text{-a.s. for
all }t\geq 0.
\end{equation*}
\end{theorem}

\begin{proof}
This result follows directly from the one-dimensional SDE comparison theorem
\cite{ks91}. For the semimartingale inequalities satisfied by $%
R_{t}^{\varepsilon }$ (4.1) and the equality satisfied by $%
r_{t}^{\varepsilon }$ (4.3): The initial conditions are equal: $%
R_{0}^{\varepsilon }=r_{0}^{\varepsilon }=0$. The drift coefficient of $%
R_{t}^{\varepsilon }$ is pointwise greater than or equal to the drift
coefficient of $r_{t}^{\varepsilon }$ for all $r>0$. The diffusion
coefficient of $R_{t}^{\varepsilon }$ is pointwise greater than or equal to
the diffusion coefficient of $r_{t}^{\varepsilon }$ for all $r>0$. The
comparison theorem for semimartingales implies that the stochastic dominance
relation holds for all $t\geq 0$ $\mathbb{P}$-a.s. (For a complete proof of
the comparison Theorem \ref{com}, see Theorem 3.7 in \cite{ks91}.)
\end{proof}

\subsection{Instantaneous Escape of the Comparison Process}

We now prove that the one-dimensional comparison process $r_{t}^{\varepsilon
}$ instantaneously escapes the origin for any fixed noise intensity $%
\varepsilon >0$--i.e., it never stays at the origin for a positive time
interval.

\begin{lemma}[Instantaneous Escape]
\label{lem:instant_escape} For any $\varepsilon >0$, the solution $%
r_{t}^{\varepsilon }$ to the comparison SDE (4.3) satisfies:
\begin{equation*}
\mathbb{P}\left( r_{t}^{\varepsilon }>0\text{ for all }t>0\right) =1.
\end{equation*}
\end{lemma}

\begin{proof}
Define the first escape time from the origin:
\begin{equation*}
\tau _{\varepsilon }=\inf \left\{ t>0:r_{t}^{\varepsilon }>0\right\} .
\end{equation*}%
We need to show $\tau _{\varepsilon }=0$ $\mathbb{P}$-a.s. Suppose for
contradiction that $\mathbb{P}(\tau _{\varepsilon }>0)=p>0$.

Consider the Lyapunov function $V(r)=r^{2}$ (smooth on $\mathbb{R}$ and
non-negative). For any $t>0$, apply It\^{o}'s formula to $V(r_{t\wedge \tau
_{\varepsilon }}^{\varepsilon })$:
\begin{equation*}
V(r_{t\wedge \tau _{\varepsilon }}^{\varepsilon })=V(r_{0}^{\varepsilon
})+\int_{0}^{t\wedge \tau _{\varepsilon }}\mathcal{L}_{\varepsilon
}V(r_{s}^{\varepsilon })ds+2\varepsilon \int_{0}^{t\wedge \tau _{\varepsilon
}}r_{s}^{\varepsilon }dW_{ts},
\end{equation*}%
where $\mathcal{L}_{\varepsilon }$ is the infinitesimal generator of the
comparison process $r_{t}^{\varepsilon }$:
\begin{equation*}
\mathcal{L}_{\varepsilon }V(r)=2r\left( -\omega (r)+\frac{\varepsilon
^{2}c_{0}}{2r}\right) +\varepsilon ^{2}=-2r\omega (r)+\varepsilon
^{2}(c_{0}+1).
\end{equation*}%
On the event $\{\tau _{\varepsilon }>0\}$, we have $r_{s}^{\varepsilon }=0$
for all $s\in \lbrack 0,\tau _{\varepsilon })$. Thus the generator evaluated
at zero is:
\begin{equation}
\mathcal{L}_{\varepsilon }V(0)=\varepsilon ^{2}(c_{0}+1)>0,  \label{4.4}
\end{equation}%
since $c_{0}=(d-1)>0$. Taking expectations of both sides of (\ref{4.4}):
\begin{equation}
\mathbb{E}\left[ V(r_{t\wedge \tau _{\varepsilon }}^{\varepsilon })\right] =%
\mathbb{E}\left[ \int_{0}^{t\wedge \tau _{\varepsilon }}\mathcal{L}%
_{\varepsilon }V(r_{s}^{\varepsilon })ds\right] \geq \varepsilon
^{2}(c_{0}+1)\mathbb{E}\left[ t\wedge \tau _{\varepsilon }\right] .
\label{4.5}
\end{equation}%
We now derive a contradiction:

\begin{itemize}
\item The left-hand side of (\ref{4.5}) satisfies $\mathbb{E}\left[
V(r_{t\wedge \tau _{\varepsilon }}^{\varepsilon })\right] =\mathbb{E}\left[
(r_{t\wedge \tau _{\varepsilon }}^{\varepsilon })^{2}\right] \rightarrow 0$
as $t\rightarrow 0^{+}$, since $r_{0}^{\varepsilon }=0$ and the process is
continuous.

\item The right-hand side of (\ref{4.5}) satisfies $\varepsilon ^{2}(c_{0}+1)%
\mathbb{E}\left[ t\wedge \tau _{\varepsilon }\right] \geq \varepsilon
^{2}(c_{0}+1)tp>0$ for small $t>0$ (by our contradiction assumption $\mathbb{%
P}(\tau _{\varepsilon }>0)=p>0$).
\end{itemize}

This contradiction implies our initial assumption is false--we must have $%
\mathbb{P}(\tau _{\varepsilon }>0)=0$, so $\tau _{\varepsilon }=0$ $\mathbb{P%
}$-a.s. Thus $r_{t}^{\varepsilon }>0$ for all $t>0$ $\mathbb{P}$-a.s. The
proof is complete.
\end{proof}

\subsection{Proof of support Theorem \protect\ref{vd}}

Let $T>0$, $d\geq 1$. Let $(\Omega ,\mathcal{F},(\mathcal{F}_{t}),\mathbb{P})
$ be a filtered probability space supporting a $d$-dimensional Brownian
motion $W_{t}$. Consider the It\^{o} SDE:
\begin{equation}
dX_{t}=b(X_{t})\,dt+\sigma (X_{t})\,dW_{t},\quad X_{0}=x_{0}\in \mathbb{R}%
^{d}.  \label{sde}
\end{equation}

\begin{enumerate}
\item[\textbf{(H1)}] \label{ass:drift} $b:\mathbb{R}^{d}\rightarrow \mathbb{R%
}^{d}$ is Borel measurable and bounded: $\Vert b\Vert _{\infty
}:=\sup_{x}|b(x)|\leq M_{b}<\infty .$

\item[\textbf{(H2)}] \label{ass:sigma} $\sigma :\mathbb{R}^{d}\rightarrow
\mathbb{R}^{d\times d}$ is bounded, continuous, and uniformly
non-degenerate: there exists $\lambda >0$ such that
\begin{equation*}
\xi ^{\top }\sigma (x)\sigma (x)^{\top }\xi \geq \lambda |\xi |^{2},\quad
\forall \xi \in \mathbb{R}^{d},\ x\in \mathbb{R}^{d}.
\end{equation*}
\end{enumerate}

Define the second-order operator
\begin{equation*}
\mathcal{L}f=\frac{1}{2}\sum_{i,j}a^{ij}\partial
_{ij}f+\sum_{i}b^{i}\partial _{i}f,\quad a=\sigma \sigma ^{\top }.
\end{equation*}

\begin{definition}
Let $\mathcal{A}$ be the class of $h\in L^{2}([0,T];\mathbb{R}^{d})$. The
set of reachable paths is
\begin{equation*}
\mathcal{S}=\left\{ \phi \in C([0,T];\mathbb{R}^{d}):\phi (0)=x_{0},\ \dot{%
\phi}(t)=b(\phi (t))+\sigma (\phi (t))h(t)\ \text{for some }h\in \mathcal{A}%
\right\} .
\end{equation*}
\end{definition}

\begin{theorem}[Support Theorem]
\label{vd2} Let $P=\Law(X)$ on $C([0,T];\mathbb{R}^{d})$, where $X$ is the
unique strong solution of \eqref{sde}. Then $\supp P=\overline{\mathcal{S}}.$
\end{theorem}

The following results are due to Krylov \cite%
{krylov-1980,krylov-1994,krylov-1999}.

\begin{lemma}[Existence and Uniqueness]
Under Assumptions \ref{ass:drift}--\ref{ass:sigma}, SDE \eqref{sde} has a
unique strong solution with continuous paths.
\end{lemma}

\begin{lemma}[Krylov Estimate]
For any $p>d+2$, there exists $C>0$ depending only on $d,p,T,\lambda ,\Vert
b\Vert _{\infty },\Vert \sigma \Vert _{\infty }$ such that $\mathbb{E}\left[
\sup_{0\leq t\leq T}|X_{t}|^{p}\right] \leq C(1+|x_{0}|^{p}).$
\end{lemma}

\begin{lemma}[Tightness]
Let $X^{n}$ solve SDEs with drifts $b_{n}$ uniformly bounded in $n$. Then
the laws $\{\Law(X^{n})\}$ are tight in $\mathcal{P}(C([0,T];\mathbb{R}^{d}))
$.
\end{lemma}

\begin{lemma}[Lower Semicontinuity of Supports]
If $P_{n}\Rightarrow P$ weakly, then $\supp P\subset \overline{%
\bigcup_{n=1}^{\infty }\supp P_{n}}.$
\end{lemma}

\begin{lemma}[Girsanov]
Let $h\in L^{2}([0,T];\mathbb{R}^{d})$. Define
\begin{equation*}
\mathcal{E}_{T}=\exp \left( \int_{0}^{T}h_{s}\cdot dW_{s}-\frac{1}{2}%
\int_{0}^{T}|h_{s}|^{2}ds\right) .
\end{equation*}%
If $\mathbb{E}[\mathcal{E}_{T}]=1$, then under $dQ=\mathcal{E}_{T}dP$, $%
\tilde{W}_{t}=W_{t}-\int_{0}^{t}h_{s}ds$ is a Brownian motion.
\end{lemma}

\paragraph{Proof of Theorem \protect\ref{vd2}}

We prove both inclusions $\supp P\subset \overline{\mathcal{S}}\quad $and$%
\quad \overline{\mathcal{S}}\subset \supp P.$

\noindent Step 1: Mollification of the Drift. Let $\rho _{n}$ be a standard
mollifier on $\mathbb{R}^{d}$. Define
\begin{equation*}
b_{n}(x)=\int_{0}^{T}\int_{\mathbb{R}^{d}}\rho _{n}(x-y)b(y)dy.
\end{equation*}%
Then $b_{n}\in C_{b}^{\infty }([0,T]\times \mathbb{R}^{d})$, $\Vert
b_{n}\Vert _{\infty }\leq \Vert b\Vert _{\infty }$, $b_{n}\rightarrow b$ in $%
L_{\mathrm{loc}}^{p}([0,T]\times \mathbb{R}^{d})$ for all $p<\infty $. Let $%
X^{n}$ solve
\begin{equation*}
dX_{t}^{n}=b_{n}(X_{t}^{n})dt+\sigma (X_{t}^{n})dW_{t},\quad X_{0}^{n}=x_{0},
\end{equation*}%
and let $P_{n}=\Law(X^{n})$.

\noindent Step 2: Classical Support Theorem for $b_{n}.$ Since $b_{n},\sigma
$ are bounded continuous, the original Stroock--Varadhan theorem gives $%
\supp P_{n}=\overline{\mathcal{S}_{n}},$ where
\begin{equation*}
\mathcal{S}_{n}=\{\phi :\phi (0)=x_{0},\ \dot{\phi}=b_{n}(\phi )+\sigma
(\phi )h,\ h\in \mathcal{A}\}.
\end{equation*}

\noindent Step 3: Weak Convergence $P_{n}\Rightarrow P.$ By Krylov
estimates, $\{P_{n}\}$ is tight. Let $P_{\infty }$ be a weak limit. For any
test function $f\in C_{c}^{\infty }((0,T)\times \mathbb{R}^{d})$,
\begin{equation*}
\mathbb{E}\left[ f(t,X_{t}^{n})-f(0,x_{0})-\int_{0}^{t}(\partial _{s}+%
\mathcal{L}_{n})f(s,X_{s}^{n})ds\right] =0.
\end{equation*}%
By $L^{p}$ convergence of $b_{n}\rightarrow b$ and uniform integrability, $%
P_{\infty }$ solves the martingale problem for $\mathcal{L}$. By uniqueness
(from uniform nondegeneracy), $P_{\infty }=P$. Thus $P_{n}\Rightarrow P.$

\noindent Step 4: Inclusion $\supp P\subset \overline{\mathcal{S}}.$ By
lower semicontinuity:
\begin{equation*}
\supp P\subset \overline{\bigcup_{n}\supp P_{n}}=\overline{\bigcup_{n}%
\overline{\mathcal{S}_{n}}}.
\end{equation*}%
We show $\overline{\bigcup_{n}\mathcal{S}_{n}}=\overline{\mathcal{S}}.$ Let $%
\phi \in \mathcal{S}$. Then $\dot{\phi}=b(\cdot ,\phi )+\sigma (\cdot ,\phi
)h.$ Define
\begin{equation*}
h_{n}(t)=\sigma (t,\phi (t))^{-1}\big(\dot{\phi}(t)-b_{n}(t,\phi (t))\big).
\end{equation*}%
Since $b_{n}\rightarrow b$ boundedly and $\sigma ^{-1}$ is bounded
continuous, $h_{n}\rightarrow h$ in $L^{2}$. Thus $\phi \in \overline{%
\bigcup_{n}\mathcal{S}_{n}}$. Hence $\supp P\subset \overline{\mathcal{S}}.$

\noindent Step 5: Reverse Inclusion $\overline{\mathcal{S}}\subset \supp P.$
Let $\phi \in \mathcal{S}$. Then there exists $h\in \mathcal{A}$ such that
\begin{equation*}
\dot{\phi}(t)=b(\phi (t))+\sigma (\phi (t))h(t).
\end{equation*}%
Define
\begin{equation*}
\mathcal{E}_{T}=\exp \left( \int_{0}^{T}h_{s}\cdot dW_{s}-\frac{1}{2}%
\int_{0}^{T}|h_{s}|^{2}ds\right) .
\end{equation*}%
By Novikov's condition, $\mathbb{E}[\mathcal{E}_{T}]=1$. Let $Q\sim P$ with $%
dQ=\mathcal{E}_{T}dP$. Under $Q$,
\begin{equation*}
dX_{t}=\big(b(X_{t})+\sigma (X_{t})h(t)\big)dt+\sigma (X_{t})d\tilde{W}_{t}.
\end{equation*}%
Thus $\phi $ is a deterministic trajectory under $Q$. Every neighborhood of $%
\phi $ has positive $Q$-measure, hence positive $P$-measure. So $\phi \in %
\supp P$. Since $\supp P$ is closed, $\overline{\mathcal{S}}\subset \supp P.$
The proof is complete.

\begin{remark}
In Theorem \ref{vd2}, the coefficients $b$ and $\sigma$ may depend on time $%
t $.
\end{remark}

\end{document}